\documentclass[11pt,reqno]{amsart}
\usepackage[english]{babel}
\usepackage[a4paper,twoside,marginparwidth=50pt]{geometry}
\usepackage{microtype}
\usepackage{enumerate}
\usepackage{csquotes}
\usepackage{longtable}
\usepackage{bbm}
\usepackage{nicefrac}
\usepackage{array}
\usepackage{marginnote}

\usepackage{xspace}
\patchcmd{\subsection}{-.5em}{.5em}{}{}
\usepackage[hyperfootnotes=false]{hyperref} 
\usepackage{color}
\hypersetup{
	colorlinks = true,
	linkcolor = {blue},
	urlcolor = {red},
	citecolor = {blue}
}

\makeatletter
\newcommand{\mylabel}[2]{#2\def\@currentlabel{#2}\label{#1}}
\makeatother

\makeatletter
\renewcommand{\tocsection}[3]{
  \indentlabel{\@ifnotempty{#2}{\ignorespaces#1 #2\quad}}\bfseries#3}
\renewcommand{\tocsubsection}[3]{
  \indentlabel{\@ifnotempty{#2}{\ignorespaces#1 #2\quad}}#3}

\newcommand\@dotsep{4.5}
\def\@tocline#1#2#3#4#5#6#7{\relax
  \ifnum #1>\c@tocdepth
  \else
    \par \addpenalty\@secpenalty\addvspace{#2}
    \begingroup \hyphenpenalty\@M
    \@ifempty{#4}{
      \@tempdima\csname r@tocindent\number#1\endcsname\relax
    }{
      \@tempdima#4\relax
    }
    \parindent\z@ \leftskip#3\relax \advance\leftskip\@tempdima\relax
    \rightskip\@pnumwidth plus1em \parfillskip-\@pnumwidth
    #5\leavevmode\hskip-\@tempdima{#6}\nobreak
    \leaders\hbox{$\m@th\mkern \@dotsep mu\hbox{.}\mkern \@dotsep mu$}\hfill
    \nobreak
    \hbox to\@pnumwidth{\@tocpagenum{\ifnum#1=1\bfseries\fi#7}}\par
    \nobreak
    \endgroup
  \fi}
\AtBeginDocument{
\expandafter\renewcommand\csname r@tocindent0\endcsname{0pt}
}
\def\l@subsection{\@tocline{2}{0pt}{2.5pc}{5pc}{}}
\makeatother

\usepackage{amsmath}
\usepackage{amsthm}
\usepackage{amssymb}
\usepackage{mathrsfs} 
\usepackage{eucal}
\usepackage{mathtools}

\newcounter{results}[section]

\theoremstyle{plain}
\newtheorem{theorem}[results]{Theorem}
\newtheorem{lemma}[results]{Lemma}
\newtheorem{proposition}[results]{Proposition}

\theoremstyle{remark}
\newtheorem{remark}[results]{Remark}
\newtheorem{example}[results]{Example}

\theoremstyle{definition}
\newtheorem{definition}[results]{Definition}

\numberwithin{equation}{section}

\newcommand{\norm}[1]{ \left \| #1 \right \|}

\newcommand{\N}{\mathbb{N}}
\newcommand{\R}{\mathbb{R}}
\renewcommand{\AA}{\mathscr{A}}
\newcommand{\EE}{\mathscr{E}}
\newcommand{\PP}{\mathbb{P}}
\renewcommand{\SS}{\mathscr{S}}

\newcommand{\Ggamma}{{\mbox{\boldmath$\Gamma$}}}

\newcommand{\sfd}{{\sf d}}

\newcommand{\rmC}{{\mathrm C}}

\newcommand{\argmin}{\mathop{\rm argmin}\limits}

\newcommand{\Lip}{\mathop{\rm Lip}\nolimits}          
\newcommand{\Lipb}{\mathop{\rm Lip}_b\nolimits}          

\newcommand{\lip}{\mathop{\rm lip}\nolimits}          
\newcommand{\eps}{\varepsilon}  

\newcommand{\JJ}{{\mathcal J}}

\newcommand{\HK}{\mathsf H\!\!\mathsf K}

\newcommand{\prob}{\mathcal P}

\newcommand{\mm}{\mathfrak m}
\newcommand{\pp}{\mathfrak p}

\newcommand{\CE}{\mathsf{C\kern-1pt E}}

\newcommand{\pCE}{\mathsf{pC\kern-1pt E}}

\newcommand{\de}{\, \mathrm d}

\title[Approximation in Metric Sobolev Spaces]{Approximation in Metric Sobolev Spaces: A General Framework}

\author{Massimo Fornasier}
\address{Massimo Fornasier: TUM School of Computation, Information, and Technlogy - Department of Mathematics, Boltzmannstrasse 3, 85748 Garching bei M\"unchen (Germany) \newline \& 
TUM Institute for Advanced Studies
\newline \&
Munich Data Science Institute \newline \& Munich Center for Machine Learning}
\email{massimo.fornasier@cit.tum.de}

\author{Giacomo Enrico Sodini}
\address{Giacomo Enrico Sodini:  Institute of Analysis and Scientific Computing, TU Wien, Wiedner Haupt-
strasse 8-10, A-1040 Vienna, Austria}
\email{giacomo.sodini@tuwien.ac.at}

\subjclass{Primary: 46E36, 49Q22; Secondary: 28A33, 49M25, 65D15.}
\keywords{Metric measure spaces, Sobolev spaces, Cheeger energy, optimal transport, Wasserstein distance, Hellinger--Kantorovich distance, empirical risk minimization, approximation theory, Tikhonov regularization.}

\begin{document} 

\begin{abstract}
In our recent work \cite{ForHeiSod25}, we introduced a numerical framework for approximating Sobolev functions on Wasserstein spaces from finite samples, leveraging structural properties established in \cite{ForSavSod23}. The present paper demonstrates that this methodology extends far beyond that specific setting. We identify a general class of metric measure spaces---including weighted Riemannian manifolds and spaces of measures equipped with the Hellinger--Kantorovich distance---for which the key hypotheses of Hilbertianity and the existence of a computable algebra of Lipschitz functions hold. Within this abstract framework, we recover and generalize the core approximation results of \cite{ForHeiSod25} for recovering functions from random point evaluations. Our main contribution is to show that the combination of theoretical foundations from \cite{ForSavSod23} and algorithmic strategies from \cite{ForHeiSod25} is robust enough to apply to a wide variety of infinite-dimensional spaces of current interest.
\end{abstract}

\maketitle
\tableofcontents
\thispagestyle{empty}

\section{Introduction}

The primary goal of this work is to advance the program of bringing abstract functional analysis and, more specifically, the theory of metric measure spaces into the realm of practical computability. This is motivated by the growing need for robust mathematical foundations for data analysis and machine learning in settings where data is intrinsically infinite-dimensional and may come with their own intrinsic metric (e.g., distances between images or shapes). In our previous work \cite{ForHeiSod25}, we showed the feasibility and power of this approach by developing a comprehensive framework for the numerical approximation of Sobolev-smooth functions defined on the Wasserstein space of probability measures. A central and successful application was the efficient approximation of the Wasserstein distance function itself, a task of significant importance in fields such as image comparison, retrieval, and recognition, where our method offered substantial computational speedups after training.

The success of the constructive methods in \cite{ForHeiSod25} was critically based on the deep structural properties of the underlying metric measure space, which were established in our preceding theoretical work \cite{ForSavSod23}. A key finding was that, for any finite reference measure $\mathfrak{m}$ on the Wasserstein space $(\prob_2(\R^d), \mathsf{W}_2)$, the associated Cheeger energy is a quadratic form, making the Sobolev space $H^{1,2}$ a Hilbert space. At this introductory level, let us just mention the the Cheeger energy serves as a surrogate of the integral norm of the gradient in the context of metric spaces, see \eqref{eq:CheegerEnergy} for the precise definition.

This Hilbertianity, combined with the explicit computability of the pre-Cheeger energy (see \eqref{eq:pce}) on a sufficiently large and practically implementable algebra of Lipschitz functions (cylinder functions), provided the essential tool for the approximation algorithms in \cite{ForHeiSod25}. A crucial step in those algorithms involved going from a continuous (or diffused) probability measure $\mathfrak{p}$ on the Wasserstein space to its discrete (or empirical)  approximations $\mathfrak{p}_N$, drawn from random samples.

This type of ``discrete-to-continuous" transition is a well-established theme in approximation theory and numerical analysis, often explored to develop methods that are both theoretically sound and computationally viable. Notably, the work of \cite{AmbColDma} shares a similar spirit, investigating approximation problems in metric measure spaces using a discretization of the underlying space. However, a cornerstone of their analysis is the metric doubling property of the underlying space. 
Under such condition one can define partitions of the space with finitely many neighboring sets, whose number is uniform with respect to the resolution of the partition. Then a discrete Cheerger energy can be defined in terms of finite differences of integral evaluations over neighboring sets. As the resolution of the partition increases,  \cite[Theorem 7.4]{AmbColDma} proves that such discrete Cheerger energy tends -- in the sense of $\Gamma$-convergence --  to become equivalent, up to multiplicative constants, to the Cheeger energy.
While the doubling condition is powerful, it also means that the metric space is intrinsically finite-dimensional, corresponding to the finitely many neighbooring sets of partitions.  Therefore the approximation proposed \cite{AmbColDma}  cannot be applied in a genuinely infinite-dimensional metric space, such as the  Wasserstein space, that was our focus in \cite{ForHeiSod25,ForSavSod23}, nor in many other infinite-dimensional metric spaces of  interest in modern application areas such as machine learning.

This observation naturally leads to the central question of the present paper: can the methodology pioneered in \cite{ForHeiSod25} be abstracted and generalized beyond the specific case of the Wasserstein space to a broader class of metric measure spaces? The answer, as we will show, is affirmative.

We begin by formally introducing a class of metric measure spaces for which the key hypotheses enabling our earlier work hold. These hypotheses, which we clarify and discuss in detail, include the Hilbertianity of the pre-Cheeger energy and the existence of a sufficiently rich algebra of Lipschitz functions for which the pre-Cheeger energy is computable. We then present a series of relevant and non-trivial examples that fit this framework. Beyond the already-treated Wasserstein space $(\prob_2(\mathbb{R}^d), \mathsf{W}_2)$, these include weighted Riemannian manifolds and spaces of measures endowed with the Hellinger--Kantorovich distance. This latter example shows the applicability of our framework to other optimal transport geometries.

Within this general setting, we then show how to recover and extend some of the core approximation results from \cite{ForHeiSod25}. Specifically, we focus on the problem of approximating an unknown function from its values known only at a finite set of random points, distributed according to the underlying reference measure. We tackle this problem in two ways. First, we approach the approximation by working at the level of the pre-Cheeger energy. This is advantageous in concrete applications, as this form is often more amenable to direct computation and numerical implementation, for instance, via neural networks as demonstrated in \cite{ForHeiSod25}. Second, by introducing an additional approximation parameter to handle the squared field representing the Cheeger energy, we demonstrate that one can work directly at the level of the Cheeger energy itself. 

This paper is primarily intended as a review and synthesis, aiming to demonstrate the robustness and generality of the techniques developed in \cite{ForHeiSod25}. While the results are not entirely surprising given the foundational work in \cite{ForSavSod23} and the algorithmic framework in \cite{ForHeiSod25}, the primary contribution lies in showing that this powerful computational-analytical framework can be coherently and meaningfully extended to encompass a much wider class of infinite-dimensional metric measure spaces, including several of significant interest in modern applications.

Finally, this work is situated within the broader, and still partly unexplored, context of analysis in infinite-dimensional spaces, and in particular in spaces of measures. By demonstrating the robustness and applicability of a computational-analytical framework beyond the specific case of the Wasserstein space, we hope to highlight the richness of this field and encourage further investigation. Many fundamental questions, such as the existence of compact Sobolev embeddings, the validity of Poincaré inequalities, and the analysis of intrinsic distances induced by Dirichlet forms and their associated stochastic processes, remain interesting and open questions for future research in this infinite-dimensional setting.

\section{The relaxation approach to Metric Sobolev spaces}
Several approaches to defining Sobolev spaces on metric-measure spaces have been studied in recent years. Here, a metric-measure space is a complete and separable metric space $(\SS, \sfd)$ endowed with a non-negative, finite Borel measure $\mm$ on $(\SS,\sfd)$. This family of spaces includes, among others, weighted Banach spaces, Riemannian and Finsler manifolds, RCD spaces, Carnot groups, and Alexandrov spaces. At least four different approaches exist for defining a Sobolev space on such spaces:
\begin{enumerate}
    \item[(i)] The Newtonian approach, due to Shanmugalingam \cite{Sha00} after Koskela-MacManus \cite{KosMMa98};
    \item[(ii)] The Beppo-Levi approach, due to Ambrosio, Gigli, and Savaré \cite{AmbGigSav13, AmbGigSav14}, later revisited by Savaré in \cite{Sav22};
    \item[(iii)] The derivation approach, due to Di Marino \cite{DMa14};
    \item[(iv)]The relaxation approach, originally proposed by Cheeger \cite{Che99}, and later revisited by Ambrosio, Gigli, and Savarè in \cite{AmbGigSav13, AmbGigSav14}.
\end{enumerate}
Under suitable assumptions, the above approaches are equivalent and give rise to the same object: the metric Sobolev space $H^{1,2}(\SS, \sfd, \mm)$, a Banach space of $L^2$ functions that are differentiable in a suitable sense. We cite the two monographs \cite{HeiJuhSha15, BjoBjo11} for an overview on the matter and, in particular, refer to \cite{AmbIkoLucPas24, AmbIkoLucPas25} for a comparison of the mentioned approaches. We also note that our exposition is limited to the exponent $p=2$ in the Sobolev space; analogous constructions could be carried out for $p\ne2$, as in the classical theory of Sobolev spaces on Euclidean and Riemannian settings.

Here we focus on the fourth approach, which is perhaps the easiest to present from scratch and has already been employed in the same spirit as the present paper in the work \cite{ForHeiSod25}, which we take inspiration from for most of our constructions.

We briefly introduce a few relevant objects: for a $\sfd$-Lipschitz and bounded function $f \in \Lip_b(\SS, \sfd)$ we define its $\sfd$-\emph{Lipschitz asymptotic constant} as
\begin{equation}
    \lip_\sfd h(x):= \lim_{r \downarrow 0} \Lip(f, \sfd, B_r(x)), \quad x \in \SS,
\end{equation}
where $B_r(x):= \{ y \in \SS : \sfd(x,y)<r\}$ is the $\sfd$-open ball of radius $r>0$ and, for any given $A \subset \SS$, the quantity $\Lip(f, \sfd, A)$ is the $\sfd$-Lipschitz constant of $f$ in $A$ defined as
\[
\Lip(f, \sfd, A) := \sup \left \{ \frac{|f(y)-f(z)|}{\sfd(y,z)} : y,z \in A, \, y\ne z\right \}.
\]
It is clear that $\lip_\sfd f$ serves as a surrogate for the norm of the gradient: in fact, for a smooth function $f \in \rmC^\infty(\R^d)$ it holds
\[
\lip_{\sfd_e} f (x) = |\nabla f (x)|,
\]
where $\sfd_e$ is the distance induced by the Euclidean norm $|\cdot|$ in $\R^d$. Clearly $\lip_\sfd$ is a pointwise and purely metric object: its integrated counterpart is the so-called \emph{pre-Cheeger energy}:
\begin{equation}\label{eq:pce}
    \pCE_{2,\mm} (f) := \int_\SS (\lip_\sfd f)^2 \de\mm, \quad f \in \Lipb(\SS, \sfd).
\end{equation}
Note that $\pCE$ also depends on the underlying distance $\sfd$, but we are not going to stress this dependence, which will always be clear from the context. At this point, $\pCE$ is defined only for $\sfd$-Lipschitz and bounded functions, and there is no reason for it to be lower semicontinuous w.r.t.~the $L^2$ topology. Therefore, we introduce its $L^2$ relaxation, the \emph{Cheeger energy}, given by
\begin{equation}\label{eq:relaxintroche}
  \CE_{2,\mm}(f) = \inf \left \{ \liminf_{n \to \infty}
    \pCE_{2,\mm}(f_n) : (f_n)_n \subset \Lipb(\SS, \sfd), \, f_n \to f \text{ in } L^2(\SS,\mm)
  \right \}.
\end{equation}
The resulting Sobolev space $H^{1,2}(\SS, \sfd, \mm)$ is then the vector space of functions $f\in L^2(\SS, \mm)$ with finite Cheeger energy endowed with the norm
\begin{align} \label{eq:CheegerEnergy} |f|^2_{H^{1,2}(\SS, \sfd, \mm)}:= \int_\SS |f|^2 \de\mm + \CE_{2,\mm}(f),
\end{align}
which makes it a Banach space (cf.~\cite[Theorem 2.1.17]{GigPas20}). Note that, for every $f \in \Lipb(\SS, \sfd)$, we have that $\lip_\sfd f \le \Lip(f,\sfd, \SS)$ so that $\Lipb(\SS, \sfd) \subset H^{1,2}(\SS, \sfd, \mm)$.

\subsection{Infinitesimal Hilbertianity}
In general, $H^{1,2}(\SS, \sfd, \mm)$ is not a Hilbert space. For instance, for $\SS=\R^d$, $\sfd(x,y):=|x-y|_\infty$, $x,y \in \R^d$, is the distance induced by the infinity norm, and $\mm$ is a Gaussian measure on $\R^d$, this is indeed not a Hilbert space. When the Hilbertian property is satisfied, we say that the metric-measure space $(\SS, \sfd, \mm)$ is \emph{infinitesimally Hilbertian}, a crucial property which is the starting point of the theory of RCD spaces. When a (complete and separable) metric space $(\SS, \sfd)$ is such that $(\SS, \sfd, \mm)$ is infinitesimally Hilbertian for every non-negative, finite, Borel measure $\mm$ on $(\SS, \sfd)$, then we say that $(\SS, \sfd)$ is \emph{universally infinitesimally Hilbertian}. Weighted Riemannian manifolds, weighted Banach spaces, and other non-trivial spaces fall into this category, as we will see in the following.

It has been shown in \cite{ForSavSod23} that a sufficient condition for a metric-measure space $(\SS, \sfd, \mm)$ to be infinitesimally Hilbertian is given by the existence of a unital subalgebra $\AA \subset \Lipb(\SS, \sfd)$ (that is, $\AA$ is a subalgebra of $\Lip_b(\SS, \sfd)$ and contains constant functions) with the two following properties:
\begin{enumerate}
    \item[\mylabel{it:1}{(1)}] $\AA$ is strongly dense in the metric Sobolev space: for every $f \in H^{1,2}(\SS, \sfd, \mm)$ there exists a sequence $(f_n)_n \subset \AA$ such that $f_n \to f$ in $H^{1,2}(\SS, \sfd, \mm)$;
    \item[\mylabel{it:2}{(2)}] \label{it:2}$(\pCE_{2,\mm}, \AA)$ is quadratic, equivalently:
    \[
    \pCE_{2,\mm}(f+h) + \pCE_{2,\mm}(f-h) = 2\pCE_{2,\mm}(f)+2\pCE_{2,\mm}(h) \quad \text{ for every } f,h \in \AA.
    \]
\end{enumerate}
Note that condition \ref{it:2} implies, by polarization, that $\pCE$ is a quadratic form.
On the other hand, condition \ref{it:1} also implies that we can approximate the Cheeger energy of $f$ via the pre-Cheeger energy of the sequence $(f_n)_n$, that is,
\[
\lim_{n \to \infty} \pCE_{2,\mm}(f_n) = \CE_{2, \mm}(f).
\]
In general, the Cheeger energy can be difficult to compute or is defined only implicitly. In contrast, the pre-Cheeger energy often takes a more explicit form, for instance, when the following condition (which is stronger than \ref{it:2}) holds:
\begin{enumerate}
    \item[\mylabel{it:2p}{(2')}] \label{it:2p} there exists a symmetric bilinear map $\Ggamma: \AA \times \AA \to L^1(\SS, \mm)$ such that
    \[ (\lip_\sfd f)^2 = \Ggamma(f,f) \quad \text{ for every } f \in \AA.\]
\end{enumerate}

We now list several examples where conditions \ref{it:1} and \ref{it:2p} (and consequently condition \ref{it:2}) are satisfied. In all the following examples, the metric space under consideration is universally infinitesimally Hilbertian.

\begin{example}[Weighted Euclidean spaces]\label{ex:eu}
    In the Euclidean space $\R^d$, we consider the distance $\sfd_e$ induced by the Euclidean norm, and a non-negative, finite Borel measure $\mm$ on $(\R^d, \sfd_e)$. Then one can take $\AA = \rmC_c^\infty(\R^d)$, the space of smooth and compactly supported functions on $\R^d$, together with the map
    \[
        \Ggamma(f,h) := \langle \nabla f, \nabla g \rangle, \quad f,h \in \rmC_c^\infty(\R^d),
    \]
    where $\nabla$ is the Euclidean gradient. Both properties \ref{it:1} and \ref{it:2} can be found in e.g.~\cite[Example 5.3.8]{Sav22}. The Hilbertianity of $H^{1,2}(\R^d, \sfd_e, \mm)$ can be found in e.g.~\cite[Theorem 11]{DMaLucPas20}.
\end{example}

\begin{example}[Weighted Riemannian manifolds]\label{ex:ri}
    Let $(M,g)$ be a (smooth, complete) Riemannian manifold with induced Riemannian metric $\sfd_g$, and let $\mm$ be a non-negative, finite Borel measure on $(M, \sfd_g)$. As in the Euclidean case, we can take $\AA = \rmC_c^\infty(M)$, the space of smooth and compactly supported functions on $M$, together with the map
    \[
        \Ggamma(f,h) := g( \nabla^g f, \nabla^g h), \quad f,h \in \rmC^\infty_c(M),
    \]
    where $\nabla^g$ is the gradient induced by $g$. Both properties \ref{it:1} and \ref{it:2} can be found in e.g.~\cite[Example 5.3.8]{Sav22}. The Hilbertianity of $H^{1,2}(M, \sfd_g, \mm)$ can be found in \cite[Theorem 3.11]{LucPas20}.
\end{example}

\begin{example}[Weighted Hilbert spaces]\label{ex:hi}
    If $(H, \langle \cdot, \cdot \rangle)$ is a separable Hilbert space with induced distance $\sfd_H$, for any non-negative, finite Borel measure $\mm$ on $(H, \sfd_H)$, we can take as $\AA$ the space $\rmC^1_b(H)$ of continuously Fréchet-differentiable functions with bounded Fréchet differential on $H$, with the map
    \[
        \Ggamma(f,h) := \langle Df, Dh \rangle, \quad f,h \in \rmC^1_F(H),
    \]
    where $D$ is the Fréchet differential. Properties \ref{it:1} and \ref{it:2} are contained in \cite[Example 5.3.9]{Sav22} and the Hilbertianity of $H^{1,2}(H, \sfd_H, \mm)$ in \cite[Corollary 5.3.11]{Sav22}.
\end{example}

Before introducing the next three examples, we define the algebra that will play the role of $\AA$ in all of them: the space of so-called \emph{cylinder functions}. Let $\mathsf{E}$ be a Euclidean space, a Riemannian manifold, or a separable Hilbert space, and let $\mathcal{M}_+(\mathsf E)$ be the space of non-negative, finite Borel measures on $(\mathsf E, \sfd_{\mathsf E})$, where $\sfd_\mathsf{E}$ is the Euclidean, the Riemannian, or the Hilbertian distance, respectively. For a bounded Borel function $f: \mathsf{E} \to \R$, we define the map $f^\star: \mathcal{M}_+(\mathsf E) \to \R$ by
\[
    f^\star(\mu) := \int_{\mathsf{E}} f \de \mu, \quad \mu \in \mathcal{M}_+(\mathsf E).
\]
A function $u: \mathcal{M}_+(\mathsf E) \to \R$ is called a cylinder function if 
\begin{equation}\label{eq:thecyl}
    u = F \circ (f_0^\star, f_1^\star, \dots, f_k^\star),
\end{equation}
for some $F \in \rmC^\infty_c(\R^{k+1})$, $k \in \N_0$, smooth (in the appropriate sense, as in the examples above) functions $f_i$ on $\mathsf{E}$ for $i=1, \dots, k$, and $f_0$ equal to the constant function $1$. The resulting algebra of functions is denoted by $\mathsf{Cyl}(\mathsf E)$. We mention that the following three examples belong to the larger framework of \emph{Unbalanced Optimal Transport}, see \cite{SavSod24}.

\begin{example}[$L^2$-Hellinger spaces]\label{ex:he}
    Let $\mathsf{He}_2$ be the $L^2$-\emph{Hellinger distance} on $\mathcal{M}_+(\mathsf E)$ given by
    \[
        \mathsf{He}_2(\mu, \nu)^2 := \int_{\mathsf E} \left | \sqrt{ \frac{\de \mu}{\de \eta}}- \sqrt{\frac{\de \nu}{\de \eta}} \right |^2 \de \eta, \quad \mu, \nu \in \mathcal{M}_+(\mathsf E),
    \]
    where $\eta \in \mathcal{M}_+(\mathsf E)$ is any measure such that $\mu, \nu \ll \eta$, and $\de \mu / \de \eta$ (resp.~$\de \nu / \de \eta$) denotes the Radon--Nikodym derivative of $\mu$ (resp.~$\nu$) with respect to $\eta$. We stress that, by $2$-homogeneity, the above definition does not depend on the choice of $\eta$, so one can always take e.g.~$\eta = \mu + \nu$. It can be shown that $(\mathcal{M}_+(\mathsf E), \mathsf{He}_2)$ is a complete and separable metric space whose topology is that of total variation; see e.g.~\cite{LuiSav21}. Let $\mm$ be any non-negative, finite Borel measure on $\mathcal{M}_+(\mathsf E)$. For $\AA = \mathsf{Cyl}(\mathsf E)$, the map $\Ggamma$ is given by
    \begin{equation}\label{eq:gammaver}
        \Ggamma^{\textrm{ver}}(u,v)(\mu) := \int_{\mathsf E} (\boldsymbol{\nabla}^{\textrm{ver}} u)_\mu(x) (\boldsymbol{\nabla}^{\textrm{ver}} v)_\mu(x) \de \mu(x), \quad \mu \in \mathcal{M}_+(\mathsf E), \, u,v \in \mathsf{Cyl}(\mathsf E),
    \end{equation}
    where $\boldsymbol{\nabla}^{\textrm{ver}}$ is the \emph{vertical gradient} given, for a cylinder function $u$ as in \eqref{eq:thecyl}, by
    \[
        (\boldsymbol{\nabla}^{\textrm{ver}} u)_\mu(x) := \sum_{i=0}^k \partial_i F ((f_0^\star, f_1^\star, \dots, f_k^\star)(\mu)) f_i(x), \quad x \in \mathsf{E}, \, \mu \in \mathcal{M}_+(\mathsf E).
    \]
    The density property \ref{it:1} and the Hilbertianity of $H^{1,2}(\mathcal{M}_+(\mathsf E), \mathsf{He}_2, \mm)$ are proven in \cite[Corollary 4.20]{DscSod25}, while property \ref{it:2} is shown in \cite[Corollary 4.8]{DscSod25}.
\end{example}

\begin{example}[$L^2$-Kantorovich--Rubinstein--Wasserstein spaces]\label{ex:w2}
    Let $\prob_2(\mathsf E)$ be the space of Borel probability measures $\mu$ on $\mathsf E$ with finite $2$-moment, i.e.,
    \[
        \int_{\mathsf E} \sfd_{\mathsf E}(x,x_o)^2 \de \mu(x) < \infty \quad \text{ for some (hence for any) } x_o \in \mathsf E.
    \]
    On $\prob_2(\mathsf E)$ we consider the $L^2$-\emph{Kantorovich--Rubinstein--Wasserstein} distance $\mathsf{W}_2$ given by
    \[
        \mathsf{W}_2(\mu, \nu)^2 := \inf \left \{ \int_{\mathsf E \times \mathsf E} \sfd_{\mathsf E}(x,y)^2 \de \pi(x,y) \, : \, \pi \in \Pi(\mu, \nu)\right \}, \quad \mu, \nu \in \prob_2(\mathsf E), 
    \]
    where $\Pi(\mu, \nu)$ is the set of Borel probability measures on $\mathsf E \times \mathsf E$ having $\mu$ and $\nu$ as first and second marginal, respectively. It is well known that $(\prob_2(\mathsf E), \mathsf{W}_2)$ is a complete and separable metric space whose topology is stronger than the weak (or narrow) one; see e.g.~\cite{AmbGigSav08}. Let $\mm$ be any non-negative, finite Borel measure on $\prob_2(\mathsf E)$. Since $\prob_2(\mathsf E) \subset \mathcal{M}_+(\mathsf E)$, we can still consider the algebra $\AA = \mathsf{Cyl}(\mathsf E)$, with the map $\Ggamma$ given by
    \begin{equation}\label{eq:gammahor}
        \Ggamma^{\textrm{hor}}(u,v)(\mu) := \int_{\mathsf E} \langle (\boldsymbol{\nabla}^{\textrm{hor}} u)_\mu(x) ,(\boldsymbol{\nabla}^{\textrm{hor}} v)_\mu(x)\rangle_{\mathsf{T}_x \mathsf{E}} \de \mu(x), \quad \mu \in \prob_2(\mathsf E), \, u,v \in \mathsf{Cyl}(\mathsf E),
    \end{equation}
    where $\boldsymbol{\nabla}^{\textrm{hor}}$ is the \emph{horizontal gradient} given, for a cylinder function $u$ as in \eqref{eq:thecyl}, by
    \[
        (\boldsymbol{\nabla}^{\textrm{hor}} u)_\mu(x) := \sum_{i=0}^k \partial_i F ((f_0^\star, f_1^\star, \dots, f_k^\star)(\mu)) \nabla^{\mathsf E} f_i(x), \quad x \in \mathsf{E}, \, \mu \in \prob_2(\mathsf E),
    \]
    where $\nabla^{\mathsf E}$ is the appropriate differential or gradient operator on $\mathsf E$. The density property \ref{it:1} and the Hilbertianity of $H^{1,2}(\prob_2(\mathsf E), \mathsf{W}_2, \mm)$ are contained in \cite[Theorem 4.10, Corollary 4.11]{ForSavSod23}, while property \ref{it:2} is \cite[Proposition 4.9]{ForSavSod23}. These results are further extended in \cite{Sod23, DscSod25}.
\end{example}

\begin{example}[Hellinger--Kantorovich space]\label{ex:hk}
    On the space $\mathcal{M}_+(\mathsf E)$ we consider a different distance, namely the so-called \emph{Hellinger--Kantorovich} (also known as Wasserstein--Fisher--Rao) distance given by
    \[
        \HK(\mu, \nu)^2 := \inf \left \{ \int_0^1 \int_{\mathsf{E}} \left [ |v_t|^2 + \frac{1}{4} |w_t|^2 \right ] \de \eta_t(x) \de t : \partial_t \eta + \textrm{div}(v_t \eta_t )= w_t \eta_t,\,\, \eta_0 = \mu,\,\eta_1 = \nu \right \},
    \]
    where the infimum is taken over narrowly continuous curves $\eta: [0,1] \to \mathcal{M}_+(\mathsf E)$ satisfying the PDE above in the distributional sense on $[0,1] \times \mathsf E$. The space $(\mathcal{M}_+(\mathsf E), \HK)$ is a complete and separable metric space; see the works \cite{KonMonVor16, ChiPeySchVia18, LieMieSav18} where it was introduced, and \cite{DPoSodTam25} for the connection between Wassertein, Hellinger, and Hellinger--Kantorovich distances. Considering again the algebra $\AA$ of cylinder functions, the map $\Ggamma$ in this case is given by
    \[
        \Ggamma(u,v) := \Ggamma^{\textrm{hor}}(u,v) + 4 \Ggamma^{\textrm{ver}}(u,v), \quad u,v \in \textrm{Cyl}(\mathsf E),
    \]
    where $\Ggamma^{\textrm{hor}}$ is as in \eqref{eq:gammahor} and $\Ggamma^{\textrm{ver}}$ is as in \eqref{eq:gammaver}. Property \ref{it:1} and the Hilbertianity of $(\mathcal{M}_+(\mathsf E), \HK, \mm)$ for any non-negative, finite Borel measure $\mm$ on $(\mathcal{M}_+(\mathsf E), \HK)$ are established in 
\cite[Corollary 4.17]{DscSod25}. Property \ref{it:2} is proven in \cite[Proposition 4.7]{DscSod25}.
\end{example}

\begin{remark}[Extension to the Hilbertian case] We remark that, while the case of $\mathsf E$ being equal to a separable Hilbert case has been explicitly treated for the $L^2$-Kantorovich--Rubinstein--Wasserstein setting in Example \ref{ex:w2} in \cite[Section 6.3]{ForSavSod23}, this has not been done explicitly for the $L^2$-Hellinger and Hellinger--Kantorovich cases in Examples \ref{ex:he} and \ref{ex:hk}, respectively. However, exactly the same argument of \cite[Theorem 6.4]{ForSavSod23} can be used simply replacing $\mathsf W_2$ with $\mathsf{He}_2$ or $\HK$.
\end{remark}

We briefly comment on the general ideas used to establish properties \ref{it:1} and \ref{it:2} in Examples \ref{ex:eu}-\ref{ex:hk}. 

Regarding property \ref{it:2}, while its validity is straightforward in the Euclidean, Riemannian, and Hilbertian settings of Examples \ref{ex:eu}-\ref{ex:hi}, it requires a more delicate approach in the less concrete settings of Examples \ref{ex:he}-\ref{ex:hk}. In the latter cases, the property is obtained by exploiting the fine structure of geodesics in the underlying metric space. 

Property \ref{it:1} can be derived in all examples from the general criterion proven in \cite[Theorem 2.13]{ForSavSod23} and later extended in \cite[Theorem 2.12]{DscSod25}. This criterion shows that density is achieved by approximating---in the Sobolev sense---functions of the form $\SS \ni x \mapsto \sfd(x,y)$ as the base point $y \in \SS$ varies.

Finally, as noted above, Hilbertianity follows directly from properties \ref{it:1} and \ref{it:2} (see \cite[Theorem 2.17]{ForSavSod23}).

We conclude this section mentioning the paper \cite{PasSod25} for the use of subalgebras of Lipschitz functions in metric BV spaces.

\section{Regularized least squares approximation of bounded functions}\label{sec:regbdfunc}
We show in this section how to recover some results of \cite{ForHeiSod25} originally obtained for the metric-measure space $(\prob_2(\mathsf E), \mathsf{W}_2, \mm)$ as in Example \ref{ex:w2}, within the more general setting of arbitrary metric-measure spaces. In Subsection \ref{sub:apprpce} we will work at the level of the pre-Cheeger energy as in \eqref{eq:pce} as it is more common, see the Examples \ref{ex:eu}-\ref{ex:hk}, that one can compute it explicitly on relevant subalgebras $\AA$ of functions. 

We point out that, in case the pre-Cheeger energy is \emph{closable} on $\AA$ (that is, the $L^2(\SS, \mm)$ closure of the graph of $\pCE_{2, \mm}$ restricted to $\AA$ is still a graph) and $\AA$ is strongly dense as in \ref{it:1}, then the Cheeger energy, when restricted to $\AA$, coincides with the pre-Cheeger energy. Therefore, in the closable case, the procedure of Section \ref{sub:apprpce} can be carried out with $\CE_{2, \mm}$ instead of $\pCE_{2, \mm}$. This is indeed one of the few cases in which the Cheeger energy is explicitly computable, at least for the functions in $\AA$.
Closability obviously depends on the choice of the measure $\mm$ (besides, of course, on $(\SS, \sfd)$): it holds, for example, in Example \ref{ex:eu} when $\mm$ is the Lebesgue measure, in Example \ref{ex:ri} when $\mm$ is the Riemannian volume measure, and also in Examples \ref{ex:w2} and \ref{ex:hk} for specific choices of $\mm$, see the works \cite{Dsc25, DscSod25} for possible choices of such measures.

However, in general, one cannot hope for the closability property or the possibility to explicitly compute the Cheeger energy. This is however crucial to carry out the procedure of Section \ref{sub:apprpce} for the Cheeger energy. We can however, work also at the level of the Cheeger energy, at the cost of introducing an approximation parameter, via a Lusin type procedure. This is the content of Section \ref{sub:apprce}.

\subsection{Direct pre-Cheeger approach}\label{sub:apprpce}
We consider a metric-measure space $(\SS, \sfd, \pp)$, with $\pp$ a probability, together with a unital subalgebra $\AA \subset \Lipb(\SS, \sfd)$ satisfying \ref{it:2p} and additionally
\begin{enumerate}
    \item[\mylabel{it:3}{(3)}] \label{it:3} the map $\Ggamma$ in \ref{it:2} takes values in $\rmC_b(\SS)$, the space of continuous and bounded functions on $(\SS, \sfd)$.
\end{enumerate}
Note that \ref{it:3} is satisfied in the Examples \ref{ex:eu}-\ref{ex:hk}.
\medskip

We fix natural numbers $n, N \in \N$, a finite dimensional subspace $\mathcal{V}_n \subset \AA \subset H^{1,2}(\SS, \sfd, \pp)$ with $\dim(\mathcal{V}_n)=n$ and $N$ i.i.d.~points $x_1, \dots, x_N$ distributed on $\SS$ according to $\pp$; this means that we are considering an underlying probability space $(\Omega, \EE, \PP)$. Accordingly we define the (random) empirical measure $\pp_N:= \frac{1}{N}\sum_{j=1}^N \delta_{x_j}$ and we fix $\lambda \ge 0$.

\medskip

Our aim is to estimate the $L^2(\SS, \pp)$ distance between a bounded Borel function $f$ which is known only on the support of $\pp_N$ and a suitable regularization of $f$ itself. We start with a definition.

\begin{definition}\label{def:someop} Given a function $f \in L^2(\SS, \pp_N)$, we define the functionals
\begin{align*}
\JJ_{N, f}^\lambda(g)&:= \|f-g\|^2_{L^2(\SS, \pp_N)} + \lambda \pCE_{2,\pp_N}(g), \quad &&g \in \Lip_b(\SS, \sfd),\\
\mathcal{N}_{N,f}(g) &:= \|f-g\|^2_{L^2(\SS, \pp_N)}, \quad &&g \in L^2(\SS, \pp_N).
\end{align*}
Both functionals admit a unique minimizer when restricted to $\mathcal{V}_n$ so that we can define
\begin{align*}
    S_{N}^{\lambda, n}(f):= \argmin_{g \in \mathcal{V}_n} \JJ_{N, f}^\lambda(g), \quad P_N^n (f):= \argmin_{g \in \mathcal{V}_n} \mathcal{N}_{N,f}(g).
\end{align*}
\end{definition}

In order to make the objects above computable, we fix a suitable basis of $\mathcal{V}_n$.

\begin{lemma}\label{le:db} There exists a finite family of functions $\mathcal{B}_n:=\{f_1, \dots, f_n\} \subset \mathcal{V}_n$ such that
\begin{enumerate}
    \item[(i)] $\mathcal{B}_n$ is a linear basis of $\mathcal{V}_n$;
    \item[(ii)] $\int_{\SS}f_i f_j \de \pp =0$ for every $i,k \in \{1, \dots, n\}$, $i \ne k$;
    \item[(iii)] $\int_{\SS} |f_i|^2 \de \pp =1$ for every $i=1,\dots, n$;
    \item[(iv)] $\pCE_{2,\pp}(f_i, f_k)=0$ for every $i,k \in \{1, \dots, n\}$, $i \ne k$.
\end{enumerate} 
\end{lemma}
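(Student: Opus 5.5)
This is simultaneous diagonalization of two quadratic forms on the finite-dimensional space $\mathcal{V}_n$. The first form is the $L^2(\SS,\pp)$ inner product. The second is the bilinear pre-Cheeger form. By \ref{it:2p}, the latter is defined on $\AA$ by
\[
\pCE_{2,\pp}(f,h):=\int_\SS \Ggamma(f,h)\de\pp, \quad f,h \in \AA .
\]
It is symmetric and bilinear, and $\pCE_{2,\pp}(f,f)=\int_\SS(\lip_\sfd f)^2\de\pp$. It is finite because $\Ggamma(f,f)=(\lip_\sfd f)^2\le \Lip(f,\sfd,\SS)^2$, and $\Ggamma(f,h)$ is integrable by \ref{it:3}, or simply by polarization.

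The first step is to check that $a(f,h):=\int_\SS fh\de\pp$ is an inner product on $\mathcal{V}_n$, and not merely a semi-inner product. Here one must be careful. Elements of $\mathcal{V}_n$ are genuine Lipschitz functions, and a nonzero function might vanish $\pp$-a.e. if $\pp$ does not have full support. I would therefore interpret $\mathcal{V}_n$ as an $n$-dimensional subspace of $H^{1,2}(\SS,\sfd,\pp)$, that is, of $L^2(\SS,\pp)$-classes, as the inclusion $\mathcal{V}_n\subset H^{1,2}(\SS,\sfd,\pp)$ in the statement suggests. Under this interpretation $a$ is positive definite on $\mathcal{V}_n$. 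This is the only point where I expect any subtlety. If it failed, (iii) would be impossible anyway, so this interpretation is forced.

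Next, equip $\mathcal{V}_n$ with the inner product $a$, so that it becomes an $n$-dimensional Euclidean space. The form $b(f,h):=\pCE_{2,\pp}(f,h)$ is symmetric and bilinear on it. By the Riesz representation theorem in $(\mathcal{V}_n,a)$ there is a unique linear operator $T:\mathcal{V}_n\to\mathcal{V}_n$ with $b(f,h)=a(Tf,h)$ for all $f,h$. Symmetry of $b$ makes $T$ self-adjoint with respect to $a$. Moreover $T\ge0$, since $b(f,f)\ge0$.

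By the spectral theorem for self-adjoint operators on finite-dimensional inner product spaces, there is an $a$-orthonormal basis $f_1,\dots,f_n$ of eigenvectors, $Tf_i=\mu_if_i$ with $\mu_i\ge0$. Properties (i)--(iii) are exactly $a$-orthonormality. Property (iv) follows from
\[
b(f_i,f_k)=a(Tf_i,f_k)=\mu_i\,a(f_i,f_k)=0 \quad \text{for } i\ne k.
\]
Concretely, one can start from any basis, apply Gram--Schmidt with respect to $a$, and then diagonalize the symmetric Gram matrix of $b$ by an orthogonal change of coordinates. This also gives a computable procedure.
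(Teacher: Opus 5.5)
Your proof is correct and is the paper's argument: simultaneous diagonalization of $(f,h)\mapsto\int_\SS fh\de\pp$ and the symmetric nonnegative form $\pCE_{2,\pp}(\cdot,\cdot)$ on the finite-dimensional space $\mathcal{V}_n$, which you spell out via the self-adjoint operator $T$ and the spectral theorem. Your caveat that the $L^2(\SS,\pp)$ form must be positive definite on $\mathcal{V}_n$ is a genuine implicit assumption the paper does not state, but it is best phrased as \emph{no nonzero element of $\mathcal{V}_n$ vanishes $\pp$-a.e.}, not by passing to $\pp$-classes, because $\pCE_{2,\pp}$ can differ for two Lipschitz functions that agree $\pp$-a.e.
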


\begin{proof} This follows by the simultaneous diagonalizations of the two bilinear forms
\[ (f,g) \mapsto \int_{X} fg \de \pp, \quad (f,g) \mapsto \pCE_{2, \pp}(f,g)\]
in the finite dimensional vector space $\mathcal{V}_n$. Note we are only using the bilinearity given by \ref{it:2}.
\end{proof}

In the next Lemma we make the form of $S_{N}^{\lambda, n}(f)$  as in Definition \ref{def:someop} more explicit. To do that we first need to define two matrices.

\begin{definition}\label{def:matrices} Let $\{f_1, \dots, f_n\}$ be as in Lemma \ref{le:db}. We define the matrices $L_{N,n} \in \R^{N\times n}$ and $D_n \in \R^{n \times n}$ as 
\[ (L_{N,n})_{ji} := \frac{1}{\sqrt{N}}f_i(x_j), \quad (D_n)_{ik} = \pCE_{2, \pp_N}(f_i,f_k). \]
\end{definition}

The expected value of the matrices $L_{N,n}^\top L_{N,n}$ and $D_n$ is easily computed:
\begin{equation}\label{eq:expv}
\mathbb{E} [L_{N,n}^\top L_{N,n}]= I_n \quad and \quad \mathbb{E}[D_n]= \Gamma_n,    
\end{equation}
where $I_n$ is the identity matrix of order $n$ and $\Gamma_n \in \R^{n \times n}$ is a diagonal matrix with entries
\[ (\Gamma_n)_{ii}= \pCE_{2,\pp}(f_i), \quad i=1, \dots, n.\]

In the next Lemma, we show how to render the a priori abstract quantity $S_{N}^{\lambda, n}(f)$ computable. This constitutes the first step toward utilizing such quantities in real-world computer simulations, as demonstrated in \cite{ForHeiSod25}.

\begin{lemma}\label{le:fewthings} Let $\{f_1, \dots, f_n\}$ be as in Lemma \ref{le:db} and let $f \in L^2(\SS, \pp_N)$. Then $g=S_{N}^{\lambda, n}(f)$ (cf.~Definition \ref{def:someop}) if and only if
\[ g= \sum_{i=1}^n f_i w^\star_i,\]
where $w^\star \in \R^n$ is the unique minimizer in $\R^n$ of the functional $\JJ_{N, z^{f}}^{\lambda,n}: \R^n \to \R$ defined as
\begin{equation}\label{eq:funcrn}
 \JJ_{N, z^{f}}^{\lambda,n}(w):= |L_{N,n}(w-z^f)|^2_N + \lambda w^\top D_n w, \quad w \in \R^n, 
\end{equation}
where $|\cdot|_N$ denotes the Euclidean norm in $\R^N$ and $z^{f} \in \R^n$ is such that
\[ P^n_N f = \sum_{i=1}^n f_i z_i^{f}.\]
Equivalently $w^\star$ solves
\begin{align} \label{eq:wstar}
(L_{N,n}^\top L_{N,n} + \lambda D_n ) w^\star= L_{N,n}^\top L_{N,n} z^{f}.
\end{align}
In particular, $S_{N}^{\lambda, n}$ is a linear operator.
\end{lemma}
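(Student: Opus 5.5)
Parametrize $\mathcal V_n$ through the basis of Lemma \ref{le:db} and rewrite $\JJ_{N,f}^\lambda$ restricted to $\mathcal V_n$ as a quadratic function of the coefficient vector. For $g=\sum_i f_i w_i$, bilinearity of $\Ggamma$ from \ref{it:2p} gives
\[
\pCE_{2,\pp_N}(g)=\sum_{i,k} w_iw_k\,\pCE_{2,\pp_N}(f_i,f_k)=w^\top D_n w .
\]
Here $\pCE_{2,\pp_N}(f_i,f_k)$ means $\frac1N\sum_j \Ggamma(f_i,f_k)(x_j)$, which is well defined pointwise by \ref{it:3}.

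For the fidelity term, I would use that $P^n_N f$ is the $L^2(\SS,\pp_N)$-orthogonal projection of $f$ onto the subspace $\mathcal V_n$ (viewed in $L^2(\SS,\pp_N)$). Pythagoras then gives, for every $g\in\mathcal V_n$,
\[
\|f-g\|^2_{L^2(\pp_N)}=\|f-P^n_Nf\|^2_{L^2(\pp_N)}+\|P^n_Nf-g\|^2_{L^2(\pp_N)}.
\]
The first term does not depend on $g$. Writing $P^n_N f=\sum_i f_i z^f_i$, one checks directly that $\|P^n_Nf-g\|^2_{L^2(\pp_N)}=\frac1N\sum_j\bigl(\sum_i f_i(x_j)(z^f_i-w_i)\bigr)^2=|L_{N,n}(w-z^f)|_N^2$. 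So, up to an additive constant, $\JJ^\lambda_{N,f}(g)=\JJ^{\lambda,n}_{N,z^f}(w)$, and minimizers correspond under $w\mapsto\sum_i f_iw_i$.

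A subtlety is that $z^f$ need not be unique if the $f_i$ are linearly dependent on the sample points, i.e.\ if $L_{N,n}$ is not injective. I would note that any representative works, because only $L_{N,n}z^f$ enters, up to the constant. The same issue affects the claimed uniqueness of the minimizer, and this is the step I expect to be the main obstacle. The Hessian of $\JJ^{\lambda,n}_{N,z^f}$ is $2(L_{N,n}^\top L_{N,n}+\lambda D_n)$, which is positive semidefinite since $D_n$ is a Gram-type matrix of $\Ggamma\ge0$ (as $(\lip f)^2=\Ggamma(f,f)$). Strict convexity, and hence uniqueness, requires $\ker L_{N,n}\cap\ker D_n=\{0\}$.

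I would handle this by relying on the uniqueness asserted in Definition \ref{def:someop}, i.e.\ assuming the standing nondegeneracy that makes the minimizers unique. Uniqueness of $g$, together with linear independence of the $f_i$ as functions on $\SS$ (Lemma \ref{le:db}(i)), then gives uniqueness of $w^\star$. If $w,w'$ both minimize, their images $g,g'$ both minimize $\JJ_{N,f}^\lambda$, so $g=g'$ and hence $w=w'$.

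Finally, the first-order condition $\nabla_w\JJ^{\lambda,n}_{N,z^f}=2L_{N,n}^\top L_{N,n}(w-z^f)+2\lambda D_nw=0$ is exactly \eqref{eq:wstar}. By convexity it is necessary and sufficient for minimality. Under the nondegeneracy above, $L_{N,n}^\top L_{N,n}+\lambda D_n$ is invertible, so
\[
w^\star=(L_{N,n}^\top L_{N,n}+\lambda D_n)^{-1}L_{N,n}^\top L_{N,n}z^f .
\]
Linearity of $S^{\lambda,n}_N$ then follows. The map $f\mapsto P^n_Nf$ is linear as an orthogonal projection, so $f\mapsto L_{N,n}z^f$ is linear; composing with the fixed matrix and with the linear map $w\mapsto\sum_if_iw_i$ gives a linear operator.
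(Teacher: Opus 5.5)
Your proposal is correct and follows essentially the same route as the paper: you subtract the $g$-independent term $\|f-P^n_Nf\|^2_{L^2(\SS,\pp_N)}$ and identify the resulting functional on $\mathcal V_n$ with $\JJ^{\lambda,n}_{N,z^f}$ through the coefficient map $w\mapsto\sum_i f_iw_i$. You also derive \eqref{eq:wstar} explicitly as the first-order condition, prove linearity, and point out that uniqueness actually needs nondegeneracy ($\ker L_{N,n}\cap\ker D_n=\{0\}$ for $\lambda>0$), which the paper simply takes for granted in Definition \ref{def:someop}; these fill in details the paper leaves implicit.
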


\begin{proof}
For every $g \in \mathcal{V}_n$, we can rewrite
\begin{align*}
\JJ_{N, f}^\lambda(g) &= \frac{1}{N}\sum_{j=1}^N |g(x_j)-f(x_j)|^2 + \lambda \pCE_{2, \pp_N}(g)
 \\
&= \frac{1}{N}\sum_{j=1}^N |g(x_j)-P^n_N f (x_j)|^2 + \lambda \pCE_{2, \pp_N}(g) + C(n,N,f)
\end{align*}
where $C(n,N,F)$ is a constant depending only on $f$, $n$ and $N$. Furthermore, upon defining
\[
\tilde{\JJ}_{N, f}^\lambda(g):= \frac{1}{N}\sum_{j=1}^N |g(x_j)-P^n_N f(x_j)|^2 + \lambda \pCE_{2, \pp_N}(g)
\]
we obtain that
\begin{align*}
\JJ_{N, f}^\lambda(G) = \tilde{\JJ}_{N, f}^\lambda(g) + C(n,N,f).
\end{align*}

Since $\tilde{\JJ}_{N, f}^\lambda$ and $\JJ_{N, f}^\lambda$ differ only by a constant, they have the same minimizer. For a given $g \in \mathcal{V}_n$ there exists a unique $w^g \in \R^n$ such that 
\[ g= \sum_{i=1}^n f_i w_i^g\]
and it is immediate to check that $\tilde{\JJ}_{N, f}^\lambda (g) = \JJ_{N, z^{f}}^{\lambda,n}(w^g)$. This concludes the proof.
\end{proof}

\begin{remark} \label{rem:wstar}
We note that~\eqref{eq:wstar} can equivalently be stated as
\[
(L_{N,n}^\top L_{N,n}+\lambda D_n)w^\star=y^f,
\]
where $y^f \in \mathbb{R}^n$ with
\[y^f_i=\frac{1}{N} \sum_{j=1}^N f(x_j) f_i(x_j), \qquad 1 =1,\dotsc,n.\]
In particular, we may obtain the solution $w^\star$ by solving a linear equation.
\end{remark}

Next result is an auxiliary large deviation bound, which allows us to control the spectral behavior of the matrices introduced in Definition \ref{def:matrices}.

\begin{proposition} \label{prop:probbound} Let $L_{N,n}$ and $D_n$ be defined as in Definition \ref{def:matrices} and let $I_n$ and $\Gamma_n$ be as in \eqref{eq:expv}. Then 
\begin{align}\label{eq:lab1}
\PP ( \|L_{N,n}^\top L_{N,n} - I_n\| > \delta ) &\le 2n \exp \left \{ -N \frac{c_\delta}{ K_\lambda(n)}\right \} \quad \text{ for every } 0 < \delta <1
\end{align}
and, for every $0<\delta<1$,
\begin{align} \label{eq:lab2}
\mathbb{P} \left ( \|L_{N,n}^\top L_{N,n} + \lambda D_n -(I_n +\lambda \Gamma_n) \|> \delta \sigma_{\max,n}^{\lambda} \right ) &\le 2n \exp \left \{ -N \sigma_{\min,n}^{\lambda} \frac{c_\delta}{ K_\lambda(n)} \right \},
\end{align}
where 
\begin{align*}
 K_\lambda (n)&:= \sup_{x \in \SS} \sum_{i=1}^n \left(|f_i(x)|^2 +  \lambda  \Ggamma(f_i, f_i)(x) \right), \\
\sigma_{\min,n}^{\lambda} &:= 1+\lambda \min_{i=1, \dots, n} \pCE_{2, \pp}(f_i), \quad \sigma_{\max,n}^{\lambda} := 1+\lambda \max_{i=1, \dots, n} \pCE_{2, \pp}(f_i), \\
c_{\delta}&:= (1+\delta)\log(1+\delta)-\delta,
\end{align*}
and $\|\cdot\|$ denotes the spectral norm.
\end{proposition}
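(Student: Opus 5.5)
Both bounds follow from the matrix Chernoff inequality (Tropp) for sums of independent, positive semidefinite, uniformly bounded random matrices. In that form, if $Y_1,\dots,Y_N$ are independent random symmetric $n\times n$ matrices with $0\preceq Y_j$ and $\lambda_{\max}(Y_j)\le R$ almost surely, and $\mu_{\min},\mu_{\max}$ are the extreme eigenvalues of $\mathbb{E}\sum_j Y_j$, then
\[
\PP\Bigl(\lambda_{\min}\bigl(\textstyle\sum_j Y_j\bigr)\le(1-\delta)\mu_{\min}\Bigr)\le n\Bigl[\tfrac{e^{-\delta}}{(1-\delta)^{1-\delta}}\Bigr]^{\mu_{\min}/R},
\]
\[
\PP\Bigl(\lambda_{\max}\bigl(\textstyle\sum_j Y_j\bigr)\ge(1+\delta)\mu_{\max}\Bigr)\le n\Bigl[\tfrac{e^{\delta}}{(1+\delta)^{1+\delta}}\Bigr]^{\mu_{\max}/R}.
\]
The upper tail has exponent $-c_\delta\mu_{\max}/R$. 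The lower-tail exponent $(1-\delta)\log(1-\delta)+\delta$ dominates $c_\delta$ for $0<\delta<1$, so both tails are bounded by $n\exp(-c_\delta\mu_{\min}/R)$, and a union bound gives the factor $2n$.

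\textbf{Decomposition.} For the $j$-th sample define the rank-structured matrix
\[
Y_j:=\tfrac1N\Bigl(\bigl(f_i(x_j)f_k(x_j)\bigr)_{ik}+\lambda\bigl(\Ggamma(f_i,f_k)(x_j)\bigr)_{ik}\Bigr).
\]
By Definition~\ref{def:matrices} and \ref{it:2p}, $(D_n)_{ik}=\pCE_{2,\pp_N}(f_i,f_k)=\frac1N\sum_j\Ggamma(f_i,f_k)(x_j)$, using the polarized bilinear form. Hence
\[
L_{N,n}^\top L_{N,n}+\lambda D_n=\sum_{j=1}^N Y_j,
\]
and the $Y_j$ are i.i.d.\ because the $x_j$ are. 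Both summands of $Y_j$ are positive semidefinite. The first is $\frac1N vv^\top$ with $v=(f_i(x_j))_i$. The second is a Gram matrix of the symmetric bilinear form $\Ggamma(\cdot,\cdot)(x_j)$, which is nonnegative since $\Ggamma(h,h)=(\lip_\sfd h)^2\ge0$ for every $h\in\AA$.

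\textbf{Uniform bound.} The largest eigenvalue of a PSD matrix is at most its trace, so
\[
\lambda_{\max}(Y_j)\le\tfrac1N\sum_i\bigl(|f_i(x_j)|^2+\lambda\Ggamma(f_i,f_i)(x_j)\bigr)\le K_\lambda(n)/N=:R.
\]
This bound is only meaningful because of \ref{it:3}: boundedness of $\Ggamma(f_i,f_i)$, together with $f_i\in\Lipb$, makes $K_\lambda(n)<\infty$. Pointwise evaluation also needs care here. It is justified because by \ref{it:3} $\Ggamma$ takes values in $\rmC_b$, so $\Ggamma(f_i,f_k)(x_j)$ is a genuine value rather than an $L^1$ class.

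\textbf{Expectation.} By \eqref{eq:expv}, $\mathbb{E}\sum_j Y_j=I_n+\lambda\Gamma_n$. This uses Lemma~\ref{le:db}: the orthonormality (ii)--(iii) gives the identity part, and the $\pCE$-orthogonality (iv) makes $\Gamma_n$ diagonal. The expectation is therefore diagonal with eigenvalues $1+\lambda\pCE_{2,\pp}(f_i)$, so $\mu_{\min}=\sigma^\lambda_{\min,n}$ and $\mu_{\max}=\sigma^\lambda_{\max,n}$.

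\textbf{Conclusion and main obstacle.} Applying Chernoff, with probability at least $1-2n\exp(-N\sigma^\lambda_{\min,n}c_\delta/K_\lambda(n))$ all eigenvalues of $\sum_j Y_j$ lie in $[(1-\delta)\sigma_{\min},(1+\delta)\sigma_{\max}]$. To get \eqref{eq:lab2} from this, I need a bound on the spectral norm of the deviation, and the main obstacle is that a spectral window does not directly bound $\|\sum_jY_j-(I_n+\lambda\Gamma_n)\|$ when $\sigma_{\min}\ne\sigma_{\max}$. I would handle this by applying the Chernoff bound in the normalized form, which is how \cite{ForHeiSod25} proceeds. Setting $\Lambda:=(I_n+\lambda\Gamma_n)^{-1/2}$ and conjugating, the summands $\Lambda Y_j\Lambda$ have identity mean and $\lambda_{\max}(\Lambda Y_j\Lambda)\le K_\lambda(n)/(N\sigma_{\min})$. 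Chernoff then gives $\|\Lambda(\sum_jY_j)\Lambda-I_n\|\le\delta$ with the stated probability. Undoing the conjugation multiplies the deviation by at most $\|\Lambda^{-1}\|^2=\sigma_{\max}$, which yields \eqref{eq:lab2}.

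The case $\lambda=0$ gives \eqref{eq:lab1}: then $\sigma_{\min}=\sigma_{\max}=1$ and the mean is $I_n$. Since $K_0(n)\le K_\lambda(n)$, the bound stated with $K_\lambda(n)$ is weaker than the one obtained, so it also holds.
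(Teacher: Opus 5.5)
Your proof is correct, and it departs from the paper's at the step you flag as the main obstacle.

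\textbf{The paper's route.} The paper applies the matrix Chernoff inequality directly to the unnormalized sum $L_{N,n}^\top L_{N,n}+\lambda D_n$, with $\mu_{\min}=\sigma^\lambda_{\min,n}$, $\mu_{\max}=\sigma^\lambda_{\max,n}$ and $R=K_\lambda(n)/N$. Off an event of probability at most $2n\exp\{-N\sigma^\lambda_{\min,n}c_\delta/K_\lambda(n)\}$, this puts the spectrum in $\bigl((1-\delta)\sigma^\lambda_{\min,n},(1+\delta)\sigma^\lambda_{\max,n}\bigr)$. The paper then writes ``so that'' \eqref{eq:lab2} holds.

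\textbf{Why that step fails.} The implication is not justified when $\sigma^\lambda_{\min,n}\neq\sigma^\lambda_{\max,n}$. Take mean $\mathrm{diag}(1,10)$: the matrix $\mathrm{diag}(10,1)$ lies in the spectral window, yet it deviates from the mean by $9>10\delta$ for $\delta<0.9$.

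\textbf{How your whitening fixes it.} Conjugating by $(I_n+\lambda\Gamma_n)^{-1/2}$ costs nothing in the constants. The uniform bound becomes $K_\lambda(n)/(N\sigma^\lambda_{\min,n})$. This gives both tails with the exponent $N\sigma^\lambda_{\min,n}c_\delta/K_\lambda(n)$, which is what the paper ends up with after weakening its upper tail from $\sigma_{\max}$ to $\sigma_{\min}$. The factor $\sigma^\lambda_{\max,n}$ in the threshold then comes from $\|(I_n+\lambda\Gamma_n)^{1/2}\|^2$ when you undo the conjugation.

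So your argument proves the statement as written. The paper's unnormalized argument is complete only when the mean is a multiple of the identity, as in the Cohen--Davenport--Leviatan setting.

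\textbf{The first bound.} For \eqref{eq:lab1} the two routes coincide. The paper cites \cite{CohDavLev13}, which gives the bound with $K_0(n)$. You make explicit the monotonicity $K_0(n)\le K_\lambda(n)$, which follows from $\Ggamma(f_i,f_i)\ge0$.
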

\begin{proof} The first bound follows precisely as in the proof of \cite[Theorem 1]{CohDavLev13, CohDavLev19}. The second one analogously: $L_{N,n}^\top L_{N,n} +\lambda D_n $ is the sum of $N$ symmetric and positive semi-definite matrices $X^j$, $j=1, \dots, N$, which are i.i.d.~copies of the matrix $X$ with entries 
\[ X_{ik}:= X_{ik}^1+X_{ik}^2:= \frac{1}{N} f_i(x) f_k(x) +\frac{ \lambda}{N} \Ggamma(f_i, f_k)(x) , \quad i,k \in \{1, \dots, n\},
\]
with $x$ distributed according to $\pp$. 

Indeed, since both $X^1$ and $X^2$ are symmetric and positive semi-definite matrices, so is $X$ and thus its spectral norm can be bounded by its trace; i.e., we have that
\[
\norm{X} \leq \frac{1}{N} \sum_{i=1}^n \left( |f_i(x)|^2+\lambda  \Ggamma(f_i,f_i)(x) \right) \qquad \text{for $\pp$-a.e.} \ x \in \SS.
\]
Invoking the definition of $ K_\lambda(n)$, this immediately yields the uniform bound
\[
\norm{X} \leq \frac{ K_\lambda(n)}{N}.
\]
In turn, for every $0<\delta<1$, the matrix Chernoff inequality, cf.~\cite[Theorem 1.1]{Tro12}, yields that
\begin{align*}
    \mathbb{P} \left ( \lambda_{\min}( L_{N,n}^\top L_{N,n} + \lambda D_n )-  \sigma_{\min,n}^{\lambda}  \le - \delta \sigma_{\min,n}^{\lambda} \right ) &= \mathbb{P} \left ( \lambda_{\min}(L_{N,n}^\top L_{N,n} +\lambda D_n) \le (1-\delta) \sigma_{\min,n}^{\lambda} \right )\\
    & \le n \left ( \frac{\mathrm{e}^{-\delta}}{(1-\delta)^{1-\delta}} \right )^{\frac{N \sigma_{\min,n}^{\lambda}}{ K_\lambda(n)}}\\
    & \le n \left ( \frac{\mathrm{e}^{\delta}}{(1+\delta)^{1+\delta}} \right )^{\frac{N \sigma_{\min,n}^{\lambda}}{ K_\lambda(n)}}
\end{align*}
and 
\begin{align*}
    \mathbb{P} \left ( \lambda_{\max}( L_{N,n}^\top L_{N,n} +\lambda D_n )- \sigma_{\max,n}^{\lambda}  \ge  \delta  \sigma_{\max,n}^{\lambda}  \right ) & = \mathbb{P} \left ( \lambda_{\max}(L_{N,n}^\top L_{N,n} +\lambda D_n ) \ge (1+\delta) \sigma_{\max,n}^{\lambda} \right )\\
    & \le n \left ( \frac{\mathrm{e}^{\delta}}{(1+\delta)^{1+\delta}} \right )^{\frac{N \sigma_{\max,n}^{\lambda} }{ K_\lambda(n)}}\\
    & \le n \left ( \frac{\mathrm{e}^{\delta}}{(1+\delta)^{1+\delta}} \right )^{\frac{N \sigma_{\min,n}^{\lambda}}{ K_\lambda(n)}}
\end{align*}
so that 
\[ \mathbb{P} \left ( \|  L_{N,n}^\top L_{N,n}   +   \lambda D_n   -(I_n +\lambda \Gamma_n) \|> \delta   \sigma_{\max,n}^{\lambda}    \right ) \le 2n \exp \left \{ -N   \sigma_{\min,n}^{\lambda}    \frac{c_\delta}{ K_\lambda(n)} \right \}.\]
\end{proof}

The condition~\eqref{eq:thecond} below ensures that the probabilities in Proposition~\ref{prop:probbound} are small: we think to fix the dimension $n$ of the subspace $\mathcal{V}_n$ and an order $r>0$. Then, up to choosing enough point evaluations $N$, we obtain that the above probability is smaller than $N^{-r}$. The required condition reads as
\begin{equation}\label{eq:thecond}
    \frac{N}{\log(N)} \ge \frac{(1+r) K_\lambda(n)}{  \sigma_{\min,n}^{\lambda}   c_{\frac{1}{2  \sigma_{\max,n}^{\lambda}   }}}, \quad N \ge 2,
\end{equation}
which further implies that
\[ \frac{N}{\log(N)} \ge \frac{(1+r) K_\lambda(n)}{c_{1/2}}.\]

Indeed, given~\eqref{eq:thecond} and choosing $\delta=1/2$ in \eqref{eq:lab1} as well as $\delta=(2\sigma_{\max,n}^{\lambda})^{-1}$ in \eqref{eq:lab2} leads to
\begin{multline} \label{eq:thecondcons}
\PP \left ( \|(  L_{N,n}^\top L_{N,n}   +  \lambda D_n  )-(I_n +\lambda \Gamma_n)\| > \frac{1}{2} \right ) + \PP \left ( \|  L_{N,n}^\top L_{N,n}   - I_n\| > \frac{1}{2} \right ) \\ \le 4\frac{n}{N} N^{-r}
\le 4 \frac{ K_\lambda(n)}{N} N^{-r} \le \frac{4c_{1/2}}{\log(N)(1+r)} N^{-r}  \le \frac{4c_{1/2}}{\log(2)} N^{-r} \le N^{-r},
\end{multline}
where we have used that $ K_\lambda(n) \ge n$ and that $N \ge 2$.

With the bound \eqref{eq:thecondcons} at our disposal we can reproduce two of the main results of \cite{ForHeiSod25}.

In the first result the data are noiseless samples of the function $f$ we intend to recover. 

\begin{theorem}\label{teo:cohen2} Let $M>0$ and let $f: \SS \to [-M,M]$ be a measurable function and let $f^{\star}$ be defined as 
\[ f^\star := -M \vee S_N^{\lambda,n}(f) \wedge M,\]
where $S_N^{\lambda,n}$ is as in Definition \ref{def:someop}; let $r>0$ be given and assume that $n,N \in \N$ satisfy \eqref{eq:thecond}. Then 
\begin{multline*}
    \mathbb{E} \left [ \|f-f^\star\|_{L^2(\SS, \pp)}^2\right ] \\
    \le 2 e(f,n) \left ( 1 + \frac{ c_{1/2}}{\log(N)(1+r)(1/2+\lambda   \mu_{\min,n}  )^2} \right ) + 4 \lambda \pCE_{2, \pp_N}(P_n f) +  2  M^2 N^{-r},
\end{multline*}
where $P_n f$ is the ${L^2(\SS, \pp)}$-orthogonal projection of $f$ onto $\mathcal{V}_n$  and 
\begin{align*}
e(f,n) &:= \|P_n f-f\|^2_{L^2(\SS, \pp)},\\
  \mu_{\min,n}  &:= \min_{i=1, \dots, n} \pCE_{2, \pp}(f_i). 
\end{align*}
\end{theorem}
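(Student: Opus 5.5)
Following the scheme of Cohen--Davenport--Leviatan as adapted in \cite{ForHeiSod25}, I would split $\Omega$ into a good event and its complement. Let
\[
\Omega_+ := \Big\{ \|L_{N,n}^\top L_{N,n}+\lambda D_n-(I_n+\lambda\Gamma_n)\|\le \tfrac12\Big\}\cap\Big\{\|L_{N,n}^\top L_{N,n}-I_n\|\le\tfrac12\Big\},
\]
and let $\Omega_-$ be its complement. By \eqref{eq:thecondcons} we have $\PP(\Omega_-)\le N^{-r}$. On $\Omega_-$ I would use only the truncation: both $f$ and $f^\star$ take values in $[-M,M]$, so $\|f-f^\star\|^2_{L^2(\SS,\pp)}\le 4M^2$. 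Taking more care, one compares instead with $\|f\|^2\le M^2$ after clipping, or uses the sharper pointwise bound from \cite{CohDavLev13}. This contributes the term $2M^2N^{-r}$ (up to the constant bookkeeping in the original).

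On $\Omega_+$, truncation is $1$-Lipschitz and $f$ takes values in $[-M,M]$, so $\|f-f^\star\|\le\|f-S_N^{\lambda,n}f\|$. I would then write $f=P_nf+g$ with $g:=f-P_nf$, which is $L^2(\SS,\pp)$-orthogonal to $\mathcal V_n$. By the linearity in Lemma \ref{le:fewthings},
\[
f-S f=(P_nf-S P_nf)+g-S g,
\]
and the same lemma gives $\|f-Sf\|^2\le 2\|g\|^2+2\|S g\|^2+2\|P_nf-SP_nf\|^2$ in $L^2(\SS,\pp)$ (the first two pieces are orthogonal). The three pieces are handled as follows.

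\emph{The bias term.} For $h=P_nf\in\mathcal V_n$ with coefficient vector $z$, \eqref{eq:wstar} gives $w^\star-z=-(L^\top L+\lambda D)^{-1}\lambda Dz$. Comparing $\JJ^\lambda_{N,h}(Sh)\le \JJ^\lambda_{N,h}(h)=\lambda\pCE_{2,\pp_N}(h)$ yields $|L(w^\star-z)|^2\le\lambda\pCE_{2,\pp_N}(P_nf)$. On $\Omega_+$ we have $L^\top L\ge\frac12 I$, hence $\|Sh-h\|^2_{L^2(\pp)}=|w^\star-z|^2\le 2\lambda\pCE_{2,\pp_N}(P_nf)$. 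Multiplied by the factor 2, this gives the term $4\lambda\pCE_{2,\pp_N}(P_nf)$.

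\emph{The variance term $S g$.} By Remark \ref{rem:wstar}, $Sg$ has coefficients $(L^\top L+\lambda D)^{-1}y^g$ with $y^g_i=\frac1N\sum_j g(x_j)f_i(x_j)$. On $\Omega_+$ the smallest eigenvalue of $L^\top L+\lambda D$ is at least $\sigma^\lambda_{\min,n}-\frac12=\frac12+\lambda\mu_{\min,n}$, so
\[
\|Sg\|^2\le(\tfrac12+\lambda\mu_{\min,n})^{-2}|y^g|^2 .
\]
Since $g\perp f_i$, each $y^g_i$ is a mean of i.i.d.\ centered variables, and therefore
\[
\mathbb E|y^g|^2=\frac1N\sum_i\mathbb E[g^2f_i^2]\le \frac{K_\lambda(n)}{N}\,e(f,n).
\]
Combining this with \eqref{eq:thecond} in the form $K_\lambda(n)/N\le c_{1/2}/((1+r)\log N)$ produces the factor $\frac{c_{1/2}}{\log(N)(1+r)(1/2+\lambda\mu_{\min,n})^2}$. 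Integrating $\|g\|^2=e(f,n)$ then gives the full first term.

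The main obstacle is the variance estimate. One has to take the expectation of $|y^g|^2$ restricted to $\Omega_+$ while the matrix inverse bound holds only there; I would simply bound $\mathbb E[\mathbf 1_{\Omega_+}|y^g|^2]\le\mathbb E|y^g|^2$. A second point needing care is that the constants $2$ and $4$ come out exactly. In particular, in the bias step I must make sure the cross term between $P_nf-SP_nf$ and $g-Sg$ is absorbed by the inequality $(a+b)^2\le2a^2+2b^2$, rather than requiring orthogonality, which fails for $Sg$.
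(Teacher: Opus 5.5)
Your argument is essentially the paper's proof: the same splitting of $\Omega$ via \eqref{eq:thecondcons}, the same decomposition $f-S_N^{\lambda,n}f=(g-S_N^{\lambda,n}g)+(P_nf-S_N^{\lambda,n}P_nf)$ handled with $(a+b)^2\le 2a^2+2b^2$ and the orthogonality of $g$ to $\mathcal V_n$, and the same bias bound from minimality plus $L_{N,n}^\top L_{N,n}\ge \frac12 I_n$. For the variance term you write out the Cohen--Davenport--Leviatan computation ($\|(L_{N,n}^\top L_{N,n}+\lambda D_n)^{-1}\|\le \alpha_{\lambda,n}$ on $\Omega_+$, and $\mathbb E|y^g|^2\le \frac{K_\lambda(n)}{N}e(f,n)$), which the paper only cites.

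The one step to correct is the bad event. Comparing with $\|f\|^2\le M^2$ is legitimate only for an estimator set to zero on $\Omega_-$; for the truncated $f^\star$ you only have $\|f-f^\star\|^2_{L^2(\SS,\pp)}\le 4M^2$. The constant $2M^2N^{-r}$, which the paper also asserts without comment, follows instead from the sharper form of \eqref{eq:thecondcons}, $\PP(\Omega_-)\le \frac{4c_{1/2}}{(1+r)\log N}N^{-r}$. Since \eqref{eq:thecond} gives $N/\log N\ge 1/c_{1/2}>9$, you get $N\ge 3$ and hence $\log N>1>8c_{1/2}$, so $\PP(\Omega_-)\le \frac12 N^{-r}$ and $4M^2\,\PP(\Omega_-)\le 2M^2N^{-r}$.
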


\begin{proof}
Recall that we have fixed an underlying probability space $(\Omega, \EE, \PP)$; we consider a partition of $\Omega$ into the two sets
\[ \Omega_+:= \left \{ \omega \in \Omega : \|(  L_{N,n}^\top L_{N,n}   +  \lambda D_n  )-(I_n +\lambda \Gamma_n)\| \le \frac{1}{2} \wedge \|  L_{N,n}^\top L_{N,n}   - I_n\| \le \frac{1}{2} \right \}\]
 and its complement $\Omega_-:= \Omega\setminus \Omega_+$. Notice that \eqref{eq:thecondcons} gives $\PP(\Omega_-) \le N^{-r}$. We have then
\[ \mathbb{E} \left [ \|f-f^\star\|_{L^2(\SS, \pp)}^2\right ] \le \int_{\Omega_+} \|f-f^\star\|_{L^2(\SS, \pp)}^2 \de \PP + 2 M^2 N^{-r}.\]
We are left to estimate 

\[ \int_{\Omega_+} \|f-f^\star\|_{L^2(\SS, \pp)}^2 \de \PP \le \int_{\Omega_+} \|f-S_N^{\lambda,n}(f)\|_{L^2(\SS, \pp)}^2 \de \PP. \]

Since by Lemma \ref{le:fewthings} the operator $S_N^{\lambda,n}$ is linear, we have
\[ f-S_N^{\lambda,n}(f) = (I-S_{N}^{\lambda, n})h + P_n f -S_{N}^{\lambda, n} (P_n f), \]
where $h= f-P_nf$.

Observe that for $\xi \in \mathcal{V}_n$ and a draw in $\Omega_+$ it holds
\begin{align} \label{eq:Ineq1} 
\int_{\SS} |\xi|^2 \de \pp \le 2 \int_{\SS} |\xi|^2 \de \pp_N.
\end{align}

By the very definition of $S_{N}^{\lambda, n}$ we further find that
\begin{align*}
 \|S_{N}^{\lambda, n}(G)- G\|^2_{L^2(\SS, \pp_N)} &\le \|S_{N}^{\lambda, n}(G)- G\|^2_{L^2(\SS, \pp_N)} + \lambda \pCE_{2,\pp_N}(S_{N}^{\lambda, n}(G)) \\
 &\le \|G- G\|^2_{L^2(\SS, \pp_N)} + \lambda \pCE_{2, \pp_N}(G) \\
 &= \lambda \pCE_{2, \pp_N}(G)
\end{align*}
for any $G \in \mathcal{V}_n$. Combining this inequality with~\eqref{eq:Ineq1} yields
\begin{align*}
\int_{\Omega_+} \|P_n f -S_{N}^{\lambda, n} (P_n f)\|_{L^2(\SS, \pp)}^2 \de \PP &\le 2 \int_{\Omega_+} \|P_n f -S_{N}^{\lambda, n} (P_n f)\|_{L^2(\SS, \pp_N)}^2   \de \PP\\
& \le 2\lambda \pCE_{2, \pp_N}(P_n f).
\end{align*}
It only remains to estimate
\[ \int_{\Omega_+} \| (I-S_{N}^{\lambda, n})h\|_{L^2(\SS, \pp)}^2 \de \PP.\]
Since $h$ is $L^2(\SS, \pp)$-orthogonal to $\mathcal{V}_n$ and $S_{N}^{\lambda, n} h$ belongs to $\mathcal{V}_n$, we have that 
\[ \| (I-S_{N}^{\lambda, n})h\|_{L^2(\SS, \pp)}^2 = \|h\|^2_{L^2(\SS, \pp)} + \|S_{N}^{\lambda, n} h\|^2_{L^2(\SS, \pp)}. \]
Precisely as in \cite{CohDavLev13, CohDavLev19} one then shows that
\[ \int_{\Omega_+}\|S_{N}^{\lambda, n} h\|^2_{L^2(\SS, \pp)} \de \mathbb{P} \le \alpha_{\lambda,n}^2 \frac{ K_\lambda(n)}{N} \|h\|^2_{L^2(\SS, \pp)}, \]
where 
\begin{equation}\label{eq:alphaln}
\alpha_{\lambda,n}:=\frac{1}{\frac{1}{2} + \lambda \min_{i=1, \dots, n} \pCE_{2, \pp} (f_i)}.
\end{equation}

Putting everything together we get
\begin{multline*}
\mathbb{E} \left [ \|f-f^\star\|_{L^2(\SS, \pp)}^2\right ] \\
\begin{aligned}
&\le \int_{\Omega_+} \|f-f^\star\|_{L^2(\SS, \pp)}^2 \de \PP + 2 M^2 N^{-r}\\
&\le  2  \int_{\Omega_+} \| (I-S_{N}^{\lambda, n})h\|_{L^2(\SS, \pp)}^2 \de \PP + 4  \lambda \pCE_{2, \pp_N}(P_n f) + 2 M^2 N^{-r}\\
&\le  2  \|h\|^2_{L^2(\SS, \pp)} +  2\alpha_{\lambda,n}^2  \frac{ K_\lambda(n)}{N} \|h\|^2_{L^2(\SS, \pp)} + 4 \lambda \pCE_{2, \pp_N}(P_n f) + 2 M^2 N^{-r}. 
\end{aligned}
\end{multline*}
Finally, recalling the definition of $h$ and employing~\eqref{eq:thecond} yield that
\begin{align*}
\mathbb{E} \left [ \|f-f^\star\|_{L^2(\SS, \pp)}^2\right ] &\le \|P_n f-f\|^2_{L^2(\SS, \pp)} \left (  2  + \frac{  2\alpha_{\lambda,n}^2  c_{1/2}}{\log(N)(1+r)} \right ) \\
& \quad + 4  \lambda \pCE_{2, \pp_N}(P_n f) +  2M^2 N^{-r}.
\end{align*}
\end{proof}

We assume now to know the target function $f$ at the (random) points $x_1, \dots, x_N$ only up to some noise. In particular, we assume that we have at our disposal the noisy values
\begin{equation}\label{eq:feta}
  \widetilde{f}  (x_j):= f(x_j)+\eta_j, \quad j=1, \dots, N,
\end{equation}
where $(\eta_j)_{j=1}^N$ are i.i.d.~random variables with variance bounded by $\sigma^2>0$. Notice that the (random) function $  \widetilde{f}  $ belongs to $L^2(\SS, \pp_N)$.

\begin{theorem}\label{teo:cohen3}  Let $M>0$, let $f: \SS \to [-M,M]$ be a measurable function and let $  \widetilde{f}^\star   $ be defined as 
\[   \widetilde{f}^\star    := -M \vee S_N^{\lambda,n}(  \widetilde{f}  ) \wedge M,\]
where $S_N^{\lambda,n}$ is as in Definition \ref{def:someop} and $  \widetilde{f}  $ is as in \eqref{eq:feta} based on the noisy data $(y_j)_{j=1}^N$ whose i.i.d.~errors $(\eta_j)_{j=1}^N$ have variance bounded by $\sigma^2>0$. Let $r>0$ be given and assume that $n,N \in \N$ satisfy \eqref{eq:thecond}. Then 
\begin{align*}
\mathbb{E} \left [ \|f-  \widetilde{f}^\star   \|_{L^2(\SS, \pp)}^2\right ] &\le 2 e(f,n) \left ( 1 + \frac{ c_{1/2}}{\log(N)(1+r)(1/2+\lambda   \mu_{\min,n}  )^2} \right ) + 8\lambda \pCE_{2, \pp_N}(P_n f) + \\
& \quad + 4\frac{\sigma^2}{(1+\lambda   \mu_{\min,n}  )^2}\frac{n}{N}+ 2  M^2 N^{-r},
\end{align*}
where $e(f,n)$ and $  \mu_{\min,n}  $ are as in Theorem \ref{teo:cohen2}.
\end{theorem}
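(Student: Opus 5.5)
We follow the proof of Theorem \ref{teo:cohen2}, treating the noise as an extra linear term. Write $\eta$ for the random function on the support of $\pp_N$ with $\eta(x_j)=\eta_j$, so that $\widetilde f = f+\eta$ in $L^2(\SS,\pp_N)$. By linearity of $S_N^{\lambda,n}$ (Lemma \ref{le:fewthings}),
\[
f - S_N^{\lambda,n}(\widetilde f) = \bigl(f - S_N^{\lambda,n}(f)\bigr) - S_N^{\lambda,n}(\eta).
\]
We use the same partition $\Omega=\Omega_+\cup\Omega_-$ as before, with $\PP(\Omega_-)\le N^{-r}$ by \eqref{eq:thecondcons}. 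Since $|f-\widetilde f^\star|\le 2M$, the event $\Omega_-$ contributes at most a term of order $M^2N^{-r}$. On $\Omega_+$ the truncation is $1$-Lipschitz and $f$ takes values in $[-M,M]$, so $|f-\widetilde f^\star|\le |f-S_N^{\lambda,n}(\widetilde f)|$. Applying $(a+b)^2\le 2a^2+2b^2$ gives
\[
\int_{\Omega_+}\|f-\widetilde f^\star\|^2_{L^2(\SS,\pp)}\de\PP \le 2\int_{\Omega_+}\|f-S_N^{\lambda,n}(f)\|^2_{L^2(\SS,\pp)}\de\PP + 2\int_{\Omega_+}\|S_N^{\lambda,n}(\eta)\|^2_{L^2(\SS,\pp)}\de\PP .
\]
For the first term we reuse verbatim the estimates from the proof of Theorem \ref{teo:cohen2}: the $h=f-P_nf$ part through the Cohen--Davenport--Leviatan bound with $\alpha_{\lambda,n}$, and the $P_nf$ part through the bound $2\lambda\pCE_{2,\pp_N}(P_nf)$. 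This yields the claimed $e(f,n)$ term and the term $8\lambda\pCE_{2,\pp_N}(P_nf)$. The factor $2$ is doubled compared with Theorem \ref{teo:cohen2}; for the $e(f,n)$ term the constants should be arranged exactly as there.

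The new ingredient is the noise term. By Remark \ref{rem:wstar}, $S_N^{\lambda,n}(\eta)=\sum_i f_i w_i$ with
\[
w=(L_{N,n}^\top L_{N,n}+\lambda D_n)^{-1}y^\eta,\qquad y^\eta = \tfrac{1}{\sqrt N}L_{N,n}^\top \boldsymbol\eta,
\]
where $\boldsymbol\eta=(\eta_1,\dots,\eta_N)$. Because the $f_i$ are $L^2(\SS,\pp)$-orthonormal, $\|S_N^{\lambda,n}(\eta)\|_{L^2(\SS,\pp)}^2=|w|^2$. On $\Omega_+$ we use the lower bound $L_{N,n}^\top L_{N,n}+\lambda D_n \succeq I_n+\lambda\Gamma_n-\tfrac12 I_n$, which gives a smallest eigenvalue at least $\tfrac12+\lambda\mu_{\min,n}$.

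We first condition on the sample points and take the expectation over the noise, assuming the noise is centered and independent of the points as in \cite{ForHeiSod25}. This gives
\[
\mathbb E_\eta|y^\eta|^2\le \tfrac{\sigma^2}{N}\operatorname{tr}(L_{N,n}^\top L_{N,n}),
\]
and on $\Omega_+$ we have $\operatorname{tr}(L_{N,n}^\top L_{N,n})\le \tfrac32 n$. Combined with the eigenvalue bound, this yields an estimate of the form $C\sigma^2 n/\bigl(N(\tfrac12+\lambda\mu_{\min,n})^2\bigr)$.

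The main obstacle is matching the stated constant $4\sigma^2/(1+\lambda\mu_{\min,n})^2$. To get it we would avoid the crude inverse bound and instead write
\[
w = A^{-1}L^\top\boldsymbol\eta/\sqrt N,\qquad A = L^\top L+\lambda D_n,\quad L=L_{N,n}.
\]
Then
\[
\mathbb E_\eta|w|^2=\tfrac{\sigma^2}{N}\operatorname{tr}(A^{-1}L^\top L A^{-1}).
\]
Since $L^\top L\preceq A$, this is at most $\tfrac{\sigma^2}{N}\operatorname{tr}(A^{-1})$, and we bound $\operatorname{tr}(A^{-1})\le n\,\lambda_{\min}(A)^{-1}$. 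Using $A\succeq \tfrac12(I_n+\lambda\Gamma_n)$ on $\Omega_+$, which follows from the two spectral events in the definition of $\Omega_+$, we get $\lambda_{\min}(A)\ge \tfrac12(1+\lambda\mu_{\min,n})$. If this is not sufficient, we would accept a constant differing by a harmless factor; the exact bookkeeping, rather than the structure of the argument, is where care is needed. Summing the three contributions completes the proof.
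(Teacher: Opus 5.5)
\textbf{Where the gap is.} Your architecture matches the paper's. It uses the partition $\Omega_\pm$ controlled by \eqref{eq:thecondcons}, linearity of $S:=S_N^{\lambda,n}$, and the noiseless estimates of Theorem~\ref{teo:cohen2}. The noise is handled by a Cohen--Davenport--Leviatan-type bound for $S\eta$, based on $\lambda_{\min}(A)\ge\frac12+\lambda\mu_{\min,n}$ on $\Omega_+$, where $A:=L_{N,n}^\top L_{N,n}+\lambda D_n$. The gap is in the constants, and in one place it changes the $\lambda$-dependence.

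First, after $(a+b)^2\le 2a^2+2b^2$, reusing Theorem~\ref{teo:cohen2} verbatim doubles its entire bound. You get $4e(f,n)(1+\cdots)+8\lambda\pCE_{2,\pp_N}(P_nf)$, not $2e(f,n)(1+\cdots)$, and there is no room to ``arrange the constants as there''. The paper obtains $2e(f,n)(1+\cdots)$ and $8\lambda\pCE_{2,\pp_N}(P_nf)$ together by splitting into the three pieces $(I-S)h$, $P_nf-SP_nf$, $S\eta$ with weights $2,4,4$, and using $\|(I-S)h\|^2=\|h\|^2+\|Sh\|^2$.

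Second, your ``refined'' noise bound $\frac{\sigma^2}{N}\operatorname{tr}(A^{-1})\le\frac{2\sigma^2 n}{N(1+\lambda\mu_{\min,n})}$ has only the first power of $1+\lambda\mu_{\min,n}$. After your factor $2$ it exceeds the target $\frac{4\sigma^2}{(1+\lambda\mu_{\min,n})^2}\frac nN$ by the factor $1+\lambda\mu_{\min,n}$, which is unbounded in $\lambda$, so this is not a harmless constant. The squared dependence comes from $\|A^{-1}\|^2\le\alpha_{\lambda,n}^2$, i.e.\ from your crude bound, which gives $\frac{3\sigma^2}{(1/2+\lambda\mu_{\min,n})^2}\frac nN$. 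That is also the paper's route: its proof ends with $4\alpha_{\lambda,n}^2\sigma^2\frac nN=\frac{4\sigma^2}{(1/2+\lambda\mu_{\min,n})^2}\frac nN$. So the paper's own argument reaches the stated noise term only up to a factor at most $4$.

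\textbf{How to get the stated constants.} Keep your two-term split, but replace $(a+b)^2\le 2a^2+2b^2$ by an identity. Conditionally on the points, $f-Sf$, $A$ and $\Omega_+$ are fixed while $S\eta$ is linear in the noise. With centered noise independent of the points (you assume this explicitly; the paper needs it implicitly), the cross term vanishes and $\mathbb E_\eta\|f-S\widetilde f\|^2=\|f-Sf\|^2+\mathbb E_\eta\|S\eta\|^2$. Theorem~\ref{teo:cohen2} then applies verbatim and gives $2e(f,n)(1+\cdots)+4\lambda\pCE_{2,\pp_N}(P_nf)$.

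For the noise, on $\Omega_+$ both $L_{N,n}^\top L_{N,n}\preceq A$ and $L_{N,n}^\top L_{N,n}\preceq\frac32 I_n$ hold. Hence, with $a:=\frac12+\lambda\mu_{\min,n}$,
\[
\mathbb E_\eta\|S\eta\|^2\le\frac{\sigma^2}{N}\min\Bigl\{\operatorname{tr}A^{-1},\tfrac32\operatorname{tr}A^{-2}\Bigr\}\le\frac{\sigma^2 n}{N}\min\Bigl\{\frac1a,\frac{3}{2a^2}\Bigr\}\le\frac{4\sigma^2 n}{N(1+\lambda\mu_{\min,n})^2},
\]
using the first entry for $a\le\frac32$ and the second for $a\ge\frac32$. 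Neither entry suffices alone. With weight $2$ (yours) or $4$ (the paper's) in front of $\|S\eta\|^2$, these bounds no longer give the stated constant; for weight $2$ this fails at $\lambda\mu_{\min,n}=1$.

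Finally, $|f-\widetilde f^\star|\le 2M$ gives $4M^2\PP(\Omega_-)$, not $2M^2N^{-r}$ directly. To get $2M^2N^{-r}$, use $\PP(\Omega_-)\le 4nN^{-1-r}$ from \eqref{eq:thecondcons}, and note that \eqref{eq:thecond} forces $N\ge 8n$.
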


\begin{proof}
The proof follows the one of Theorem \ref{teo:cohen2} and it is also inspired by \cite[Theorem 3]{CohDavLev13, CohDavLev19}.

We split again $\Omega$ into the sets $\Omega_+$ and $\Omega_-$. Performing the same calculations as before, we find that
\[ \mathbb{E} \left [ \|f-  \widetilde{f}^\star   \|_{L^2(\SS, \pp)}^2\right ] \le \int_{\Omega_+} \|f-S_N^{\lambda,n}(  \widetilde{f}  )\|_{L^2(\SS, \pp)}^2 \de \PP + 2 M^2 N^{-r}.\]

We can write
\[f - S_N^{\lambda,n}(  \widetilde{f}  ) = (I-S_{N}^{\lambda, n})(h + P_n f) - S_{N}^{\lambda, n}(\eta)\]
where $h= f-P_nf$, $P_n$ is the orthogonal $L^2(\SS, \pp)$ projection on $\mathcal{V}_n$ and $\eta \in L^2(\SS, \pp_N)$ is the (random) function taking values $\eta(x_j):=\eta_j$, $j=1, \dots, N$.

Following precisely the same steps as in the previous proof, we obtain the bound
\begin{align*}
\int_{\Omega_+} \|f-S_N^{\lambda,n}(  \widetilde{f}  )\|_{L^2(\SS, \pp)}^2 \de \PP &\le 2 \|h\|^2_{L^2(\SS, \pp)}+8\lambda \pCE_{2, \pp_N}(P_n f)\\
&\quad + \int_{\Omega_+} \left ( 2\|S_N^{\lambda,n}(h)\|^2_{L^2(\SS, \pp)} + 4\|S_N^{\lambda,n}(\mathcal{\eta})\|^2_{L^2(\SS, \pp)} \right )\de \PP.    
\end{align*}

Repeating the computations of \cite[Theorem 3]{CohDavLev13, CohDavLev19} then leads to
\[\int_{\Omega_+} \left ( 2\|S_N^{\lambda,n}(h)\|^2_{L^2(\SS, \pp)} + 4\|S_N^{\lambda,n}(\mathcal{\eta})\|^2_{L^2(\SS, \pp)} \right )\de \PP \le 2\alpha_{\lambda, n}^2\frac{K_\lambda(n)}{N} \|h\|^2_{L^2(\SS, \pp)} + 4\alpha_{\lambda,n}^2\sigma^2 \frac{n}{N}, \]
where $\alpha_{\lambda, n}$ is as in \eqref{eq:alphaln}.
Combining all the estimates and doing some simple manipulations finally lead to the claimed upper bound.
\end{proof}

\begin{remark}
In practical applications, it is not always straightforward to determine the approximation subspace $\mathcal{V}_n$ that optimizes $e(f,n)$. A possible heuristic remedy, explored in \cite[Section 6.4]{ForHeiSod25}, is the employment of neural networks.
\end{remark}

\subsection{Approximated Cheeger energy approach}\label{sub:apprce}
In this subsection, we want to reproduce the results of Section \ref{sub:apprpce} but working directly at the level of the Cheeger energy. In general, we can not assume that condition \ref{it:3} holds with $\pCE$ being replaced by $\CE$ (unless, for example, we are in the special closable case discussed at the beginning of Section \ref{sec:regbdfunc}). 

Still, in general, we have that the Cheeger energy can be represented in terms of a bilinear map, in the following sense; we consider a metric-measure space $(\SS, \sfd, \pp)$ with $\pp$ a probability, together with a unital subalgebra $\AA \subset \Lipb(\SS, \sfd)$. Unlike in Section \ref{sub:apprpce}, we assume $(\SS, \sfd, \mm)$ to be infinitesimally Hilbertian. In particular, there exists a symmetric bilinear map $\Ggamma: H^{1,2}(\SS, \sfd, \mm) \times H^{1,2}(\SS, \sfd, \mm) \to L^1(\SS, \mm)$ such that 
\[
\CE_{2, \pp}(f,h) = \int_{\SS} \Ggamma(f,h) \de \pp, \quad \Ggamma(f,f) \ge 0 \text{ $\pp$-a.e.}  \quad \text{ for every } f,h \in H^{1,2}(\SS, \sfd, \mm).
\]
Here, we are denoting with $\CE_{2,\pp}(\cdot, \cdot)$ the quadratic form associated by polarization to $\CE_{2, \pp}$ which satisfies the parallelogram identity because of the Hilbertianity of $H^{1,2}(\SS, \sfd, \pp)$. The existence of the map $\Ggamma$ as above can be deduced, for example, from \cite[Theorem 4.3.3]{GigPas20}.

As in Section \ref{sub:apprpce}, we fix natural numbers $ N \in \N$ and $N$ i.i.d.~points $x_1, \dots, x_N$ distributed on $\SS$ according to $\pp$; this means that we are considering an underlying probability space $(\Omega, \EE, \PP)$. Accordingly, we define the (random) empirical measure $\pp_N:= \frac{1}{N}\sum_{j=1}^N \delta_{x_j}$ and we fix $\lambda \ge 0$.

\medskip

We consider a family of $n$-dimensional subspaces $(\mathcal{V}_n)_n \subset \Lip_b(\SS, \sfd) \subset H^{1,2}(\SS, \sfd, \pp)$; arguing as in Lemma \ref{le:db}, for any $n \in \N$, we can find a family of functions $\mathcal{B}_n:=\{f_1, \dots, f_n\}$ such that 
\begin{enumerate}
    \item[\mylabel{it:a}{(a)}]  $\mathcal{B}_n$ is a linear basis of $\mathcal{V}_n$;
    \item[\mylabel{it:b}{(b)}] $\int_{\SS}f_i f_k \de \pp =0$ for every $i,k \in \{1, \dots, n\}$, $i \ne k$;
    \item[\mylabel{it:c}{(c)}]  $\int_{\SS} |f_i|^2 \de \pp =1$ for every $i=1,\dots, n$;
    \item[\mylabel{it:d}{(d)}]  $\CE_{2,\pp}(f_i, f_k)=0$ for every $i,k \in \{1, \dots, n\}$, $i \ne k$.
\end{enumerate} 
We denote 
\[
\mathcal B := \bigcup_n \mathcal B_n \subset \AA.
\]

Instead of assuming directly that $\Ggamma$ takes values in the space of continuous functions, we can produce an approximation of it with this property, at the price of introducing an additional error parameter.

\begin{lemma}\label{le:approx} For every $\eps \in (0,1)$ there exist a Borel subset $A_\eps \subset \SS$ with $\pp(A_\eps) \le \eps$ and a symmetric map $\Ggamma_\eps: \mathcal B \times \mathcal B \to \rmC_b(\SS)$ such that
\[ \Ggamma_\eps(f,h)=\Ggamma(f,h) \text{ in } \SS \setminus A_\eps  \text{ and } \int_\SS |\Ggamma_\eps(f,h) - \Ggamma(f,h) | \de \pp < \eps  \quad \text{ for every } f,h \in \mathcal B.  \]
\end{lemma}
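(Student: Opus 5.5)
The plan is to apply Lusin's theorem pair by pair and then handle the integral condition with a truncation. The family $\mathcal B=\bigcup_n\mathcal B_n$ is countable, so the set of unordered pairs $\{f,h\}\subset\mathcal B$ is countable. Enumerate these pairs as $(p_k)_{k\in\N}$ and fix summable tolerances $\eps_k:=\eps 2^{-k-1}$. For each pair $p_k=\{f,h\}$ we fix a Borel representative of $\Ggamma(f,h)\in L^1(\SS,\pp)$. We then define $\Ggamma_\eps$ on that unordered pair, and symmetry holds by construction.

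\textbf{Step 1 (truncation).} Since $\Ggamma(f,h)\in L^1(\SS,\pp)$, choose $R_k>0$ large enough that
\begin{equation*}
\pp(|\Ggamma(f,h)|>R_k)\le \eps_k/2 \quad\text{and}\quad \int_{\{|\Ggamma(f,h)|>R_k\}}|\Ggamma(f,h)|\de\pp<\eps_k/4.
\end{equation*}

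\textbf{Step 2 (Lusin).} The measure $\pp$ is a finite Borel measure on a complete separable metric space, hence Radon. Lusin's theorem then gives a closed set $C_k$ with $\pp(\SS\setminus C_k)$ as small as we like, say $\le\eta_k\le\eps_k/2$, such that $\Ggamma(f,h)|_{C_k}$ is continuous. Apply Tietze's extension theorem in the metric space $\SS$ to the function $T_{R_k}\circ\Ggamma(f,h)|_{C_k}$, where $T_R(s)=-R\vee s\wedge R$. This gives $\Ggamma_\eps(f,h)\in\rmC_b(\SS)$ with $|\Ggamma_\eps(f,h)|\le R_k$. Set $A^k:=(\SS\setminus C_k)\cup\{|\Ggamma(f,h)|>R_k\}$; this is Borel with $\pp(A^k)\le\eps_k$. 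On $\SS\setminus A^k$ we have $\Ggamma_\eps(f,h)=\Ggamma(f,h)$.

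\textbf{Step 3 (integral estimate).} On $\SS\setminus A^k$ the difference vanishes, so
\begin{equation*}
\int_\SS|\Ggamma_\eps(f,h)-\Ggamma(f,h)|\de\pp\le \int_{A^k}|\Ggamma(f,h)|\de\pp+R_k\,\pp(A^k).
\end{equation*}
Here we must be careful, because $R_k\pp(A^k)$ could be large. The bound $\pp(\{|\Ggamma|>R_k\})R_k\le\int_{\{|\Ggamma|>R_k\}}|\Ggamma|\de\pp<\eps_k/4$ is fine. The Lusin exceptional set must be chosen \emph{after} $R_k$: take $\eta_k\le \eps_k/(4(R_k+1))$. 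Then $R_k\pp(\SS\setminus C_k)<\eps_k/4$. By absolute continuity of the integral, shrinking $\eta_k$ further also makes $\int_{\SS\setminus C_k}|\Ggamma(f,h)|\de\pp<\eps_k/4$. Altogether the integral is $<\eps_k\le\eps$.

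\textbf{Step 4 (assembly).} Set $A_\eps:=\bigcup_k A^k$. This is Borel with $\pp(A_\eps)\le\sum_k\eps_k\le\eps/2\le\eps$. Equality $\Ggamma_\eps(f,h)=\Ggamma(f,h)$ holds off $A_\eps$ for every pair, as the lemma requires. The equality and the bound hold pointwise only for the chosen Borel representatives, and equality outside $A_\eps$ is meant in that sense.

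The main obstacle is the order of quantifiers in Step 3. The truncation level must be fixed first and the Lusin set chosen depending on it, so that the bounded continuous extension does not create a large $L^1$ error on the exceptional set. Countability of $\mathcal B$ is what lets a single $A_\eps$ serve all pairs.
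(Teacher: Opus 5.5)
Your proposal is correct and follows essentially the same route as the paper: you truncate at a level fixed from the $L^1$ tail, use Lusin plus Tietze to get a bounded continuous extension, and take a countable union of exceptional sets with summable tolerances, choosing the Lusin set after the truncation level. The only difference is cosmetic: the paper applies Lusin with respect to the weighted measure $(|u|+1)\pp$ to control $\pp(\SS\setminus E_\delta)$ and $\int_{\SS\setminus E_\delta}|u|\de\pp$ at once, while you apply Lusin for $\pp$ and invoke absolute continuity of the integral, which amounts to the same thing.
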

\begin{proof} 
Fix $(f,h) \in \mathcal B \times \mathcal B$ and $\delta>0$. For simplicity, we set $u:= \Ggamma(f,h)$; since $u \in L^1(\SS, \pp)$, we can find $M_\delta>1$ such that 
\begin{equation*}
    \int_{\{ |u| > M_\delta \}} |u| \de \pp < \delta/6.
\end{equation*}
Therefore, by Markov's inequality, we have
\[
M_\delta \pp( \{ |u|>M_\delta \} ) \le \delta/6.
\]
Set $u_\delta := -M_\delta \vee u \wedge M_\delta$. By Lusin's theorem applied to the finite Borel measure
\[
\mm := (|u|+1)\pp
\]
and the bounded function $u_\delta$, there exists a closed set $E_\delta \subset \SS$ such that $u_\delta$ is continuous on $E_\delta$ and $\mm(\SS \setminus E_\delta)< \delta/(3M_\delta)$. Let us set $B_\delta := (\SS \setminus E_{\delta}) \cup \{ |u|>M_\delta\}$. We compute
\begin{align*}
    \pp(B_\delta) \le \pp( \SS \setminus E_{\delta}) + \pp (\{ |u|>M_\delta\}) \le \mm( \SS \setminus E_{\delta}) + \delta/(6 M_\delta) < \delta.
\end{align*}
Since $E_\delta$ is closed, by Tietze's extension theorem, there exists a continuous function $v_\delta$ such that $v_\delta=u_\delta$ on $E_\delta$ and $\|v_\delta\|_\infty \le M_\delta$. On $\SS \setminus B_{\delta} = E_{\delta} \setminus \{ |u|>M_\delta\}$, we have $v_{\delta}=u_\delta$ and $u_\delta=u$ so that $v_{\delta}=u$ on $\SS \setminus B_{\delta}$. We compute
\begin{align*}
 \int_\SS |u-v_{\delta}| \de \pp &= \int_{B_{\delta}} |u-v_{\delta}| \de \pp 
    \\
    & \le \int_{\SS \setminus E_\delta} |u| \de \pp + M_\delta \pp (\SS \setminus E_\delta) +  \int_{\{ |u| > M_\delta\}} |u| \de \pp + M_\delta \pp (\{ |u| > M_\delta\})  
    \\
    & \le \mm( \SS \setminus E_\delta)(1+M_\delta) + \delta/6 + \delta/6
    \\
    & \le \delta/(3M_\delta)(1+M_\delta) + \delta/3
    \\
    & \le \delta.
\end{align*}
We have shown that for every $\delta>0$ and every $(f,h) \in \mathcal{B} \times \mathcal{B}$ there exists $v_{\delta}^{f,h} \in \rmC_b(\SS)$ and a Borel subset $B_\delta^{f,h} \subset \SS$ with $\pp(\SS \setminus B_\delta^{f,h}) < \delta$ such that 
\[
v_\delta^{f,h} = \Ggamma(f,h) \text{ in } B_\delta^{f,h}, \quad \int_{\SS} |v_\delta^{f,h} - \Ggamma(f,h) | \de \pp < \delta.
\]
Let $\eps>0$ be fixed and let $(f_l)_l$ be an enumeration of $\mathcal B$; for every $l,m \in \N$ with $m \le l$ set 
$\delta_{l,m}:= \eps/2^{l+m}$ and
\[ \Ggamma_\eps(f_l, f_m) := v_{\delta_{l,m}}^{f_l, f_m}, \quad A_\eps := \bigcup_{l,m} B_{\delta_{l,m}}^{f_l, f_m}.\]
Then $\pp( A_\eps) \le \eps$, $\Gamma_\eps(f_l, f_m) \in \rmC_b(\SS)$ and, for every $l,m \in \N$ with $m \le l$, on $\SS \setminus A_\eps \subset \SS \setminus B_{\delta_{l,m}}^{f_l, f_m}$, we have $\Ggamma_{\eps}(f_l, f_m)=v_{\delta_{l,m}}^{f_l, f_m} =\Ggamma(f_l, f_m)$ and
\[
\int_\SS |\Ggamma_\eps(f_l, f_m) - \Ggamma(f_l, f_m)| \de \pp = \int_{\SS} |v_{\delta_{l,m}}^{f_l, f_m} - \Ggamma(f_l, f_m) | \de \pp < \delta_{l,m} \le \eps.
\]
When $l,m \in \N$ with $m >l$, we simply set
\[
\Ggamma_\eps(f_l, f_m) := \Ggamma_\eps(f_m, f_l).
\]
By symmetry of $\Ggamma$, we conclude the proof.
\end{proof}

For every $\eps \in (0,1)$, and every non-negative, finite Borel measure $\mm$ on $(\SS, \sfd)$,  we define a map $\CE^\eps_{2, \mm}: \mathcal B \times \mathcal B \to [0,\infty)$ as
\[
\CE^\eps_{2, \mm}(f,h) := \int_\SS \Ggamma_\eps(f,h) \de \mm \quad f,h \in \mathcal B \times \mathcal B, 
\]
where $\Ggamma_\eps$ is as in Lemma \ref{le:approx}.

\medskip

We now fix $n \in \N$ (corresponding to the subspace $\mathcal{V}_n$) and a tolerance $\gamma \in (0,1)$ for the rest of the subsection: the latter parameter will quantify the error we make in approximating the Cheeger energy $\CE_{2, \pp}$ with the above approximation $\CE_{2, \pp}^\eps$. We choose $\eps \in (0,\gamma)$ (clearly depending also on $\gamma$ and $n$) such that 
\begin{equation}\label{eq:delta}
\left ( \sum_{i=1}^n \sum_{k=1}^n \left | \CE_{2,\pp}^{\eps}(f_i, f_k)  -\CE_{2, \pp}(f_i, f_k) \right |^2 \right )^{1/2} < \gamma.
\end{equation}
The possibility of doing so follow of course from Lemma \ref{le:approx}. 

\medskip

We define the analogous matrices as in Definition \ref{def:matrices}, additionally depending on the parameter $\eps$ as above.

\begin{definition}\label{def:matrices2} Let $\{f_1, \dots, f_n\}$ be as in \ref{it:a}-\ref{it:d}. We define the matrices $ L_{N,n} \in \R^{N\times n}$ and $I_n, D_{N,n}^\eps, \Gamma_n^\eps, \Gamma_n \in \R^{n \times n}$ as
\[ (L_{N,n})_{ji} := \frac{1}{\sqrt{N}}f_i(x_j), \quad (D_{N,n}^\eps)_{ik} = \int_\SS \Ggamma_\eps(f_i, f_k) \de \pp_N, \quad (\Gamma_n^\eps)_{ik}:= \CE^{\eps}_{2, \pp}(f_i,f_k) \]
with $I_n$ the identity matrix in $\R^{n\times n}$ and $\Gamma_n$ the diagonal matrix with entries 
\[ (\Gamma_n)_{ii} = \CE_{2, \pp}(f_i, f_i), \quad i =1, \dots, n.\]
\end{definition}

The expected value of the matrices $L_{N,n}^\top L_{N,n}$ and $D_{N,n}^\eps$ is easily computed:
\begin{equation}\label{eq:expv2}
\mathbb{E} [L_{N,n}^\top L_{N,n}]= I_n \quad and \quad \mathbb{E}[D_{N,n}^\eps]= \Gamma_n^\eps;    
\end{equation}
By \eqref{eq:delta}, we see that $\|\Gamma_n^\eps - \Gamma_n\| < \gamma$ so that
\[ \|D_{N,n}^\eps - \Gamma_n \| \le \|D_{N,n} - \Gamma_n^\eps\| + \gamma.\]

\begin{proposition} \label{prop:probbound2} Let $I_n$, $L_{N,n}$, $D_{N,n}^\eps$ and $\Gamma_n^\eps$ be defined as in Definition \ref{def:matrices2}.Then 
\begin{align}\label{eq:lab12}
\PP ( \|L_{N,n}^\top L_{N,n} - I_n\| > \delta ) &\le 2n \exp \left \{ -N \frac{c_\delta}{ K_\lambda^\eps(n)}\right \} \quad \text{ for every } 0 < \delta <1
\end{align}
and, for every $0<\delta<1$, we have
\begin{align} \label{eq:lab22}
\mathbb{P} \left ( \|L_{N,n}^\top L_{N,n} + \lambda D_{N,n}^\eps -(I_n +\lambda \Gamma_n^\eps) \|> \delta \sigma_{\max,n}^{\lambda,\eps} \right ) &\le 2n \exp \left \{ -N \sigma_{\min,n}^{\lambda,\eps} \frac{c_\delta}{ K_\lambda^\eps(n)} \right \} + N\eps,
\end{align}
where 
\begin{align*}
 K_\lambda^\eps (n)&:= \sup_{x \in \SS \setminus A_\eps} \sum_{i=1}^n \left(|f_i(x)|^2 +  \lambda \Ggamma_\eps(f_i, f_i)(x) \right ), \\
\sigma_{\min,n}^{\lambda,\eps} &:= 1+\lambda \min_{i=1, \dots, n} \CE^{\eps}_{2, \pp}(f_i,f_i), \quad \sigma_{\max,n}^{\lambda} := 1+\lambda \max_{i=1, \dots, n} \CE^{\eps}_{2, \pp}(f_i,f_i), \\
c_{\delta}&:= (1+\delta)\log(1+\delta)-\delta,
\end{align*}
and $\|\cdot\|$ denotes the spectral norm.
\end{proposition}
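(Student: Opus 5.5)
The plan is to reduce Proposition \ref{prop:probbound2} to the argument of Proposition \ref{prop:probbound} by restricting to the event on which no sample point falls into the exceptional set $A_\eps$ of Lemma \ref{le:approx}.

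The first bound \eqref{eq:lab12} involves only $L_{N,n}$. We argue exactly as in \cite[Theorem 1]{CohDavLev13, CohDavLev19} via the matrix Chernoff inequality \cite[Theorem 1.1]{Tro12}. The summands are $X^j_{ik}=\frac1N f_i(x_j)f_k(x_j)$. They are positive semidefinite with mean $\frac1N I_n$, by \ref{it:b}--\ref{it:c}, and $\|X^j\|\le \frac1N\sum_i|f_i(x_j)|^2$. One subtlety: $K_\lambda^\eps(n)$ is a supremum over $\SS\setminus A_\eps$ only. However, each $f_i$ is continuous, and $\SS\setminus A_\eps$ has $\pp$-measure at least $1-\eps>0$. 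I would check that $\sup_{\SS\setminus A_\eps}\sum_i|f_i|^2$ bounds $\sum_i|f_i|^2$ $\pp$-a.e. If that fails, I would redefine $A_\eps$ up to a null set, or simply bound on the good event introduced below. Since $\Ggamma_\eps\ge0$ is not guaranteed, I would instead use the cleaner route: apply Chernoff with bound $\sup_\SS\sum_i|f_i|^2$. I would then note that this quantity is $\le K^\eps_\lambda(n)$ whenever the supremum of $\sum_i|f_i|^2$ over $\SS$ is attained, up to closure, on $\SS\setminus A_\eps$. Failing that, I would prove \eqref{eq:lab12} with the additive term $N\eps$ as well, intersecting with the good event below.

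For \eqref{eq:lab22}, define the good event $G:=\{x_j\notin A_\eps \text{ for all } j=1,\dots,N\}$. A union bound gives $\PP(\Omega\setminus G)\le N\pp(A_\eps)\le N\eps$, which produces the additive $N\eps$. On $G$, $\Ggamma_\eps(f_i,f_k)(x_j)=\Ggamma(f_i,f_k)(x_j)$ by Lemma \ref{le:approx}. Hence the summand matrices $X^j_{ik}=\frac1N f_i(x_j)f_k(x_j)+\frac\lambda N\Ggamma_\eps(f_i,f_k)(x_j)$ coincide there with the corresponding $\Ggamma$-matrices. The $\Ggamma$-matrices are positive semidefinite, because $\Ggamma$ is bilinear with $\Ggamma(f,f)\ge0$, so $\sum_{i,k}\xi_i\xi_k\Ggamma(f_i,f_k)=\Ggamma(\sum\xi_if_i,\sum\xi_if_i)\ge0$ a.e. The main obstacle is that Chernoff needs i.i.d.\ summands with an almost sure norm bound, and conditioning on $G$ changes the law and the mean. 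To avoid conditioning, I would introduce truncated i.i.d.\ matrices $\tilde X^j:=X^j\mathbbm 1_{\{x_j\notin A_\eps\}}$, or better replace $x_j\in A_\eps$ by a fixed harmless value. These are i.i.d., positive semidefinite, with trace bounded by $K_\lambda^\eps(n)/N$, and they coincide with $X^j$ on $G$. Their mean differs from $\frac1N(I_n+\lambda\Gamma_n^\eps)$ only by terms controlled by integrals over $A_\eps$, which are small by Lemma \ref{le:approx}. I would either absorb this discrepancy into the choice of $\eps$ or track it through the eigenvalue bounds $\mu_{\min}\ge\sigma^{\lambda,\eps}_{\min,n}-O(\eps)$, $\mu_{\max}\le\sigma^{\lambda,\eps}_{\max,n}$.

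Finally, I would apply the two-sided matrix Chernoff bounds to $\sum_j\tilde X^j$ exactly as in the proof of Proposition \ref{prop:probbound}. This uses the monotonicity of $(e^{\pm\delta}(1\pm\delta)^{-(1\pm\delta)})^{N\sigma/K}$ in $\sigma$ to pass from $\sigma_{\max}$ to $\sigma_{\min}$ in the exponent, and gives $2n\exp\{-N\sigma^{\lambda,\eps}_{\min,n}c_\delta/K^\eps_\lambda(n)\}$. We then conclude via $\PP(\cdot)\le\PP(\{\cdot\}\cap G)+\PP(\Omega\setminus G)$.
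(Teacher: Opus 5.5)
Your skeleton is the paper's. You restrict to $\Omega_+=\{x_j\notin A_\eps \text{ for all } j\}$, pay $\PP(\Omega_-)\le N\eps$ by a union bound, and use $\Ggamma_\eps=\Ggamma$ off $A_\eps$ to get positive semidefinite summands of norm at most $K^\eps_\lambda(n)/N$. You then run the Chernoff argument of Proposition \ref{prop:probbound}. The paper conditions on $\Omega_+$, asserts that the same estimates apply, and cites \cite{CohDavLev13} for \eqref{eq:lab12}. It never addresses the obstacle you single out: under $\PP(\,\cdot\,|\,\Omega_+)$ the points have law $\pp(\,\cdot\,|\,\SS\setminus A_\eps)$, so the mean is no longer $I_n+\lambda\Gamma^\eps_n$. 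You are right that this matters, and in fact it is fatal for the statement as written, so neither route can prove it.

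Take $\SS=[0,1]$ with Lebesgue measure, $\mathcal B\subset\rmC^1([0,1])$, $\lambda=0$ and $n=1$. Fix $\delta$, then $a^2<1-\delta$, then $N$ with $2\exp\{-Nc_\delta/a^2\}<1/4$, then $\eps<1/(4N)$. Let $f_1\in\rmC^1$ equal $a$ on $[\eps,1]$, with a bump on $[0,\eps)$ so that $\int f_1^2\de\pp=1$. Since $\Ggamma(f_1,f_1)=|f_1'|^2$ is already continuous, Lemma \ref{le:approx} and \eqref{eq:delta} hold with $\Ggamma_\eps=\Ggamma$ and $A_\eps=[0,\eps]$, whence $K^\eps_0(1)=a^2$. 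On $\Omega_+$ one has $|L_{N,1}^\top L_{N,1}-1|=1-a^2>\delta$. So both left-hand sides are at least $(1-\eps)^N>3/4$, while both right-hand sides are below $1/2$.

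Consequently your hedges cannot be resolved in favour of the statement. Adding $N\eps$ to \eqref{eq:lab12} does not help. For \eqref{eq:lab22}, the mean of your truncated sum is $\tilde B_{ik}:=\int_{\SS\setminus A_\eps}(f_if_k+\lambda\Ggamma_\eps(f_i,f_k))\de\pp$, and the return from $\tilde B$ to $I_n+\lambda\Gamma^\eps_n$ fails for three reasons.
\begin{itemize}
\item Lemma \ref{le:approx} controls $\pp(A_\eps)$ and $\int|\Ggamma_\eps-\Ggamma|\de\pp$, not $\int_{A_\eps}|f_i|^2\de\pp$. In the example that integral equals $1-a^2(1-\eps)$, so $\mu_{\min}=a^2(1-\eps)$, far from $\sigma^{\lambda,\eps}_{\min,n}-O(\eps)$.
\item $\mu_{\max}\le\sigma^{\lambda,\eps}_{\max,n}$ is not automatic either, because $\int_{A_\eps}\Ggamma_\eps(f_i,f_k)\de\pp$ need not be positive semidefinite. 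On $A_\eps$, $\Ggamma_\eps$ is an arbitrary Tietze extension.
\item $\sigma^{\lambda,\eps}_{\min,n}$ and $\sigma^{\lambda,\eps}_{\max,n}$ are diagonal entries of the non-diagonal matrix $I_n+\lambda\Gamma^\eps_n$, so they bracket its spectrum the wrong way.
\end{itemize}

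Your truncation is still the better device, since it keeps i.i.d.\ summands under $\PP$ itself. You should state explicitly the corrected result it proves, rather than hedging. First, \eqref{eq:lab12} holds with $K$ replaced by the $\pp$-essential supremum of $\sum_i|f_i|^2$, with no good event needed. Second, apply Chernoff to the whitened summands $\tilde B^{-1/2}\tilde X^j\tilde B^{-1/2}$. Whitening is needed already in Proposition \ref{prop:probbound}, because bounds on $\lambda_{\min}$ and $\lambda_{\max}$ alone do not control $\|A-B\|$ when $B$ is not a multiple of $I_n$. This gives $\PP(\|L_{N,n}^\top L_{N,n}+\lambda D^\eps_{N,n}-\tilde B\|>\delta\lambda_{\max}(\tilde B))\le 2n\exp\{-N\lambda_{\min}(\tilde B)c_\delta/K^\eps_\lambda(n)\}+N\eps$. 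The deterministic error $\|I_n+\lambda\Gamma^\eps_n-\tilde B\|$ tends to $0$ as $\eps\to0$ for fixed $n$. Making it small is therefore an extra condition on $\eps$, beyond \eqref{eq:delta}.
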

\begin{proof} The first bound follows precisely as in the proof of \cite[Theorem 1]{CohDavLev13, CohDavLev19}. For the second one, we proceed as follows: we split $\Omega$ in two parts $\Omega_+$ and $\Omega_-$ defined as
\[
\Omega_+:= \{ \omega \in \Omega : x_j(\omega) \in \SS \setminus A_\eps \text{ for every } j=1, \dots, N\}, \quad \Omega_- := \Omega \setminus \Omega_+.
\]

Consider the new probability space $(\Omega_+, \mathcal E_+, \mathbb P_+)$, where $\mathcal E_+$ is the trace of $\mathcal E$ on $\Omega_+$ and $\mathbb P_+$ is simply the normalized restriction of $\mathbb P$ to $\Omega_+$. For every $E \in \mathcal E$, we clearly have
\[
\mathbb P(E) \le \mathbb P_+(E \cap \Omega_+) \mathbb P(\Omega_+) + \mathbb P (\Omega_-) \le \mathbb P_+(E \cap \Omega_+) + N\eps.
\]
On $\Omega_+$, exactly the same estimates of Proposition \ref{prop:probbound2} can be applied noticing that, there, $L_{N,n}^\top L_{N,n} +\lambda D_{N,n}^\eps $ is the sum of $N$ independent symmetric and positive semi-definite matrices $X^j$, $j=1, \dots, N$, with entries
\[ X_{ik}^j:= \frac{1}{N} f_i(x_j) f_k(x_j) +\frac{ \lambda}{N} \Ggamma_\eps(f_i, f_k)(x_j) , \quad i,k \in \{1, \dots, n\}.
\]
Indeed on $\Omega_+$, we have that $\Ggamma_\eps(f_i, f_k)(x_j)=\Ggamma(f_i, f_k)(x_j)$ and the latter matrix is symmetric and positive semi-definite.
\end{proof}

Once that we have the bounds \eqref{eq:lab12}, \eqref{eq:lab22} at our disposal, we can derive the analogous results as in Theorems \ref{teo:cohen2} and \ref{teo:cohen3}, with the estimates containing the error introduced with the approximation parameter $\eps \in (0,1)$.

\begin{theorem}\label{teo:cohen2eps} Let $M>0$ and let $f: \SS \to [-M,M]$ be a measurable function and let $f^{\star}$ be defined as 
\[ f^\star := -M \vee S_N^{\lambda,n}(f) \wedge M,\]
where $S_N^{\lambda,n}$ is as in Definition \ref{def:someop} with $\CE^{\eps}_{2, \pp_N}$ in place of $\pCE_{2, \pp_N}$; let $r>0$ be given and assume that $n,N \in \N$ satisfy \eqref{eq:thecond}. Then 
\begin{multline*}
    \mathbb{E} \left [ \|f-f^\star\|_{L^2(\SS, \pp)}^2\right ] \\
    \le 2 e(f,n) \left ( 1 + \frac{ c_{1/2}}{\log(N)(1+r)(1/2+\lambda   \mu^\eps_{\min,n}  )^2} \right ) + 4 \lambda \CE^{\eps}_{2, \pp_N}(P_n f) +  2  M^2 (N^{-r}+N\eps),
\end{multline*}
where $P_n f$ is the ${L^2(\SS, \pp)}$-orthogonal projection of $f$ onto $\mathcal{V}_n$  and 
\begin{align*}
e(f,n) &:= \|P_n f-f\|^2_{L^2(\SS, \pp)},\\
  \mu^\eps_{\min,n}  &:= \min_{i=1, \dots, n} \CE^{\eps}_{2, \pp}(f_i). 
\end{align*}
\end{theorem}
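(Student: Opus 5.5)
The argument will follow the proof of Theorem \ref{teo:cohen2} essentially line by line. The probabilistic input is now Proposition \ref{prop:probbound2} instead of Proposition \ref{prop:probbound}, and the quadratic form $\CE^\eps_{2,\pp_N}$ (with associated $\Ggamma_\eps$) replaces $\pCE_{2,\pp_N}$.

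First I check that $S_N^{\lambda,n}$, defined with $\CE^\eps_{2,\pp_N}$, is still a well-defined linear operator. Lemma \ref{le:fewthings} goes through verbatim with $D_n$ replaced by $D^\eps_{N,n}$. The only issue is positive semi-definiteness of $D^\eps_{N,n}$, since $\Ggamma_\eps$ is only continuous and need not be nonnegative off $\SS\setminus A_\eps$. I handle this through the splitting below: on the good event all sample points lie in $\SS\setminus A_\eps$, where $\Ggamma_\eps=\Ggamma$, so $D^\eps_{N,n}$ is positive semi-definite and the minimizer exists and is unique. On the bad event I only use the crude bound $|f-f^\star|\le 2M$.

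Next I set up the splitting. Let $\Omega_+$ be the event on which all $x_j\in\SS\setminus A_\eps$ and, in addition, both
\[
\|L_{N,n}^\top L_{N,n}+\lambda D^\eps_{N,n}-(I_n+\lambda\Gamma^\eps_n)\|\le \tfrac12
\quad\text{and}\quad
\|L_{N,n}^\top L_{N,n}-I_n\|\le\tfrac12 .
\]
Applying \eqref{eq:lab12} and \eqref{eq:lab22} with the choices of $\delta$ as in \eqref{eq:thecondcons}, I aim to show $\PP(\Omega_-)\le N^{-r}+N\eps$. Here I assume, as the statement implicitly does, that \eqref{eq:thecond} is read with $K^\eps_\lambda(n)$ and $\sigma^{\lambda,\eps}_{\min/\max,n}$ in place of their unmarked counterparts. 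On $\Omega_-$ the clipped error is at most $4M^2$ pointwise, and with the normalization of Theorem \ref{teo:cohen2} this produces the term $2M^2(N^{-r}+N\eps)$.

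On $\Omega_+$ I decompose
\[
f-S^{\lambda,n}_N f=(I-S^{\lambda,n}_N)h+\bigl(P_nf-S^{\lambda,n}_N P_nf\bigr),\qquad h=f-P_nf,
\]
and treat the two pieces separately.

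\textbf{The second piece.} The norm equivalence \eqref{eq:Ineq1} holds on $\mathcal V_n$ because $\|L_{N,n}^\top L_{N,n}-I_n\|\le\tfrac12$. Combined with the minimality of $S_N^{\lambda,n}$, it gives
\[
\|P_nf-S^{\lambda,n}_N P_nf\|^2_{L^2(\SS,\pp)}\le 2\lambda\,\CE^\eps_{2,\pp_N}(P_nf).
\]
For this I need $\CE^\eps_{2,\pp_N}(S_N^{\lambda,n}P_nf)\ge0$, which holds on $\Omega_+$ by the positivity noted above.

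\textbf{The first piece.} By orthogonality,
\[
\|(I-S^{\lambda,n}_N)h\|^2_{L^2(\SS,\pp)}=\|h\|^2_{L^2(\SS,\pp)}+\|S^{\lambda,n}_N h\|^2_{L^2(\SS,\pp)} .
\]
For the last term I follow Cohen--Davenport--Leviatan. The coefficient vector of $S_N^{\lambda,n}h$ is $(L^\top L+\lambda D^\eps)^{-1}y^h$, where $y^h_i=\frac1N\sum_j h(x_j)f_i(x_j)$. The matrix $L^\top L+\lambda D^\eps_{N,n}$ has smallest eigenvalue at least $\tfrac12+\lambda\mu^\eps_{\min,n}$ on $\Omega_+$, so its inverse has norm at most $\alpha^\eps_{\lambda,n}=(\tfrac12+\lambda\mu^\eps_{\min,n})^{-1}$. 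Since $\mathbb E[y^h]=0$ by orthogonality, I get $\mathbb E|y^h|^2\le \frac{K(n)}{N}\|h\|^2$, where $K(n)=\sup_x\sum_i|f_i(x)|^2\le K^\eps_\lambda(n)$ (using $\Ggamma_\eps\ge0$ off $A_\eps$, with the supremum taken off $A_\eps$, which is harmless on $\Omega_+$). Condition \eqref{eq:thecond} then turns $K^\eps_\lambda(n)/N$ into $c_{1/2}/(\log N(1+r))$. Collecting the terms with the factor $2$ from $(a+b)^2\le2a^2+2b^2$ yields the stated bound.

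The step I expect to be the main obstacle is the expectation bound for $\|S_N^{\lambda,n}h\|^2$. Restricting to $\Omega_+$ conditions the samples to lie in $\SS\setminus A_\eps$, so they are no longer distributed exactly according to $\pp$. My first attempt is to avoid conditioning altogether: bound $\int_{\Omega_+}|y^h|^2\,d\PP\le\mathbb E|y^h|^2$ under the original product measure, where the points are i.i.d.\ with law $\pp$. This uses that $|y^h|^2\ge0$ and that $h$ is bounded. On $\Omega_+$ only the deterministic spectral bound on the inverse is used, so no change of measure is needed.
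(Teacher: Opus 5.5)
Your route is the one the paper intends. The paper gives no separate argument for Theorem~\ref{teo:cohen2eps}; it only says it follows from Proposition~\ref{prop:probbound2} as Theorem~\ref{teo:cohen2} follows from Proposition~\ref{prop:probbound}. You are also more careful than the paper about positivity of $D^\eps_{N,n}$ and about conditioning. However, two steps fail as written. First, on $\Omega_+$ you claim the smallest eigenvalue of $L_{N,n}^\top L_{N,n}+\lambda D^\eps_{N,n}$ is at least $\tfrac12+\lambda\mu^\eps_{\min,n}$. This does not follow from $\|L_{N,n}^\top L_{N,n}+\lambda D^\eps_{N,n}-(I_n+\lambda\Gamma^\eps_n)\|\le\tfrac12$, because, unlike $\Gamma_n$, the matrix $\Gamma^\eps_n$ is not diagonal. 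Lemma~\ref{le:approx} builds each $\Ggamma_\eps(f_i,f_k)$ separately, so for $i\neq k$ you only know $|(\Gamma^\eps_n)_{ik}|<\eps$. For a symmetric matrix the smallest eigenvalue is at most the smallest diagonal entry, so $\lambda_{\min}(I_n+\lambda\Gamma^\eps_n)\le 1+\lambda\mu^\eps_{\min,n}$, which is the wrong direction. Weyl's inequality together with \eqref{eq:delta} only gives $\lambda_{\min}(L_{N,n}^\top L_{N,n}+\lambda D^\eps_{N,n})\ge\tfrac12+\lambda\,\lambda_{\min}(\Gamma^\eps_n)>\tfrac12+\lambda(\mu^\eps_{\min,n}-2\gamma)$. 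You also always have $\lambda_{\min}\ge\tfrac12$, since $D^\eps_{N,n}\succeq 0$ on $\Omega_+$. So $\mu^\eps_{\min,n}$ in $\alpha$ and in the final bound must be replaced by $\lambda_{\min}(\Gamma^\eps_n)$, or by $\mu^\eps_{\min,n}-2\gamma$. The definition of $\sigma^{\lambda,\eps}_{\min,n}$ in Proposition~\ref{prop:probbound2} contains the same slip, so citing it does not repair this.

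Second, your last two paragraphs use incompatible constants in the variance bound. Integrating $|y^h|^2$ over all of $\Omega$ under $\pp^{\otimes N}$, where $y^h$ is centred, gives exactly $\mathbb{E}|y^h|^2=\frac1N\int_\SS|h|^2\sum_i|f_i|^2\de\pp$. This quantity sees $\sum_i|f_i|^2$ on $A_\eps$, so it is controlled by $\sup_\SS\sum_i|f_i|^2$, not by $K^\eps_\lambda(n)$, whose supremum runs only over $\SS\setminus A_\eps$. Your remark that the restriction is ``harmless on $\Omega_+$'' no longer applies once you have dropped the restriction to $\Omega_+$. If instead you keep the indicator of $\{x_j\notin A_\eps\ \forall j\}$ in order to use the supremum off $A_\eps$, then $y^h$ is effectively built from $h\mathbf 1_{\SS\setminus A_\eps}$, which is no longer centred. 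A bias term $\sum_i\bigl|\int_{A_\eps}hf_i\de\pp\bigr|^2$, bounded only by $\int_{A_\eps}|h|^2\de\pp$, then appears with no factor $1/N$. The clean fix is to require \eqref{eq:thecond} with $\max\{K^\eps_\lambda(n),\sup_\SS\sum_i|f_i|^2\}$ in place of $K_\lambda(n)$. This is finite because $f_i\in\Lipb(\SS,\sfd)$, and it also restores $K\ge n$, which \eqref{eq:thecondcons} uses. Since you already reinterpret \eqref{eq:thecond}, this costs little, but it has to be stated. A smaller bookkeeping point: \eqref{eq:lab22} already includes the $N\eps$ for the event that some $x_j\in A_\eps$, and your $\Omega_-$ contains that event too. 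A black-box union bound therefore gives $2N\eps$. To get $N^{-r}+N\eps$ you must intersect with $\{x_j\notin A_\eps\ \forall j\}$ before invoking the Chernoff estimate, as in the proof of Proposition~\ref{prop:probbound2}.
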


In case of noisy data, we can do the procedure as in Section \ref{sub:apprpce}: assume to know the target function $f$ at the (random) points $x_1, \dots, x_N$ only up to some noise. In particular, we assume that we have at our disposal the noisy values
\begin{equation}\label{eq:feta2}
  \widetilde{f}  (x_j):= f(x_j)+\eta_j, \quad j=1, \dots, N,
\end{equation}
where $(\eta_j)_{j=1}^N$ are i.i.d.~random variables with variance bounded by $\sigma^2>0$. Notice that the (random) function $  \widetilde{f}  $ belongs to $L^2(\SS, \pp_N)$.

\begin{theorem}\label{teo:cohen3eps}  Let $M>0$, let $f: \SS \to [-M,M]$ be a measurable function and let $  \widetilde{f}^\star   $ be defined as 
\[   \widetilde{f}^\star    := -M \vee S_N^{\lambda,n}(  \widetilde{f}  ) \wedge M,\]
where $S_N^{\lambda,n}$ is as in Definition \ref{def:someop} with $\CE^{\eps}_{2, \pp_N}$ in place of $\pCE_{2, \pp_N}$, and $  \widetilde{f}  $ is as in \eqref{eq:feta2} based on the noisy data $(y_j)_{j=1}^N$ whose i.i.d.~errors $(\eta_j)_{j=1}^N$ have variance bounded by $\sigma^2>0$. Let $r>0$ be given and assume that $n,N \in \N$ satisfy \eqref{eq:thecond}. Then 
\begin{align*}
\mathbb{E} \left [ \|f-  \widetilde{f}^\star   \|_{L^2(\SS, \pp)}^2\right ] &\le 2 e(f,n) \left ( 1 + \frac{ c_{1/2}}{\log(N)(1+r)(1/2+\lambda   \mu_{\min,n}^\eps  )^2} \right ) + 8\lambda \CE_{2, \pp_N}^\eps(P_n f) + \\
& \quad + 4\frac{\sigma^2}{(1+\lambda   \mu_{\min,n}^\eps  )^2}\frac{n}{N}+ 2  M^2 (N^{-r}+N\eps),
\end{align*}
where $e(f,n)$ and $  \mu_{\min,n}^\eps  $ are as in Theorem \ref{teo:cohen2eps}.
\end{theorem}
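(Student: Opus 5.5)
The plan is to replay the proof of Theorem \ref{teo:cohen3}, with Proposition \ref{prop:probbound2} in place of Proposition \ref{prop:probbound} and $\CE^\eps_{2,\pp_N}$ in place of $\pCE_{2,\pp_N}$.

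\textbf{Step 1: the good event.} I first split the underlying probability space into two events. Set $\Omega_A:=\{x_j\notin A_\eps \text{ for all } j\}$. On $\Omega_A$ we have $\Ggamma_\eps(f_i,f_k)(x_j)=\Ggamma(f_i,f_k)(x_j)$. Hence $D^\eps_{N,n}$ is positive semi-definite there, and $\CE^\eps_{2,\pp_N}$ is a nonnegative quadratic form on $\mathcal V_n$. This makes $S_N^{\lambda,n}$ well defined and linear, exactly as in Lemma \ref{le:fewthings}. The good event is
\[
\Omega_+ := \Omega_A\cap\Big\{\|(L^\top_{N,n}L_{N,n}+\lambda D^\eps_{N,n})-(I_n+\lambda\Gamma_n^\eps)\|\le \tfrac12,\ \|L^\top_{N,n}L_{N,n}-I_n\|\le\tfrac12\Big\}.
\]
By the union bound, $\PP(\Omega_A^c)\le N\eps$. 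Under \eqref{eq:thecond}, read with $K^\eps_\lambda(n)$ and $\sigma^{\lambda,\eps}_{\min/\max,n}$, the bounds \eqref{eq:lab12}--\eqref{eq:lab22} together with the computation \eqref{eq:thecondcons} give $\PP(\Omega_+^c)\le N^{-r}+N\eps$. Since $|f-\widetilde f^\star|\le 2M$, the bad event contributes at most $2M^2(N^{-r}+N\eps)$. This is slightly wasteful but matches the claim.

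\textbf{Step 2: decomposition on $\Omega_+$.} On $\Omega_+$, truncation is $1$-Lipschitz, so I can drop it. I then write
\[
f-S_N^{\lambda,n}(\widetilde f)=(I-S_N^{\lambda,n})h+\big(P_nf-S_N^{\lambda,n}P_nf\big)-S_N^{\lambda,n}\eta,
\qquad h:=f-P_nf,
\]
and use $\|a+b+c\|^2\le 2\|a\|^2+4\|b\|^2+4\|c\|^2$.

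\textbf{Step 3: the three terms.}
For the middle term, \eqref{eq:Ineq1} holds on $\Omega_+$, and the minimality comparison with $G=P_nf$ is unchanged (nonnegativity of $\CE^\eps_{2,\pp_N}$ is available on $\Omega_A$). Together these give the bound $2\lambda\CE^\eps_{2,\pp_N}(P_nf)$, which contributes $8\lambda\CE^\eps_{2,\pp_N}(P_nf)$ after the factor $4$.
For the first term, orthogonality of $h$ to $\mathcal V_n$ gives $\|h\|^2+\|S_N^{\lambda,n}h\|^2$.
For the remaining two, I repeat the Cohen--Davenport--Leviatan computation. On $\Omega_+$, the matrix $L^\top_{N,n}L_{N,n}+\lambda D^\eps_{N,n}$ has smallest eigenvalue at least $\tfrac12+\lambda\mu^\eps_{\min,n}$. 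Together with $\mathbb E|L^\top_{N,n} h|^2\le \frac{K^\eps_\lambda(n)}{N}\|h\|^2$ (up to the $\eps$-exceptional set) and the noise variance bound, this yields
\[
\int_{\Omega_+}\big(2\|S_N^{\lambda,n}h\|^2+4\|S_N^{\lambda,n}\eta\|^2\big)\de\PP\le 2\alpha^2\tfrac{K^\eps_\lambda(n)}{N}\|h\|^2+4\alpha^2\sigma^2\tfrac nN,
\]
where $\alpha=(\tfrac12+\lambda\mu^\eps_{\min,n})^{-1}$. Applying \eqref{eq:thecond} turns $K^\eps_\lambda(n)/N$ into $c_{1/2}/(\log N(1+r))$, which gives the stated form.

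\textbf{Main obstacle.} The delicate point is the second moment $\mathbb E|L^\top_{N,n} h|^2$, which is bounded by $\sup_x\sum_i|f_i(x)|^2$. This sup must be taken over $\SS\setminus A_\eps$ to be controlled by $K^\eps_\lambda(n)$. I would handle this by computing the expectation conditionally on $\Omega_A$, as in the proof of Proposition \ref{prop:probbound2}. That requires the normalized restricted measure, for which the points remain i.i.d. The $O(\eps)$ distortion this introduces would be absorbed into the $N\eps$ term, or simply accepted at the level of precision of the stated bound.
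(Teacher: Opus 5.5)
Your strategy is the one the paper intends: it gives no separate proof, only the remark that the bounds \eqref{eq:lab12}--\eqref{eq:lab22} yield the analogues of Theorems \ref{teo:cohen2}--\ref{teo:cohen3}. However, your proposal does not prove the displayed inequality, and the decisive gap is the one you flag yourself. Conditioning on $\Omega_A$ does not rescue the $h$-term. Under $\PP(\,\cdot\mid\Omega_A)$ the points are i.i.d.\ with law $\pp_\eps:=\pp(\,\cdot\cap(\SS\setminus A_\eps))/\pp(\SS\setminus A_\eps)$, and $h=f-P_nf$ is no longer orthogonal to $\mathcal V_n$ in $L^2(\SS,\pp_\eps)$, since $\int_\SS hf_i\de\pp_\eps=-\int_{A_\eps}hf_i\de\pp/\pp(\SS\setminus A_\eps)$. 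So, with $y^h_i=\frac1N\sum_j h(x_j)f_i(x_j)$, the off-diagonal terms of $\mathbb E|y^h|^2$ no longer cancel. They leave a bias $\frac{N-1}{N}\sum_i\big(\int_{A_\eps}hf_i\de\pp\big)^2/\pp(\SS\setminus A_\eps)^2$ that does not decay in $N$. Bessel's inequality bounds it only by a multiple of $\int_{A_\eps}h^2\de\pp$. Since $|P_nf|\le M(\sum_i f_i^2)^{1/2}$, this is controlled only through $\sum_i f_i^2$ on $A_\eps$ --- exactly what $K^\eps_\lambda(n)$ does not see. So it cannot in general be absorbed into $2M^2N\eps$, and ``accepting'' it is not a proof. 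The fix is not to condition for this term at all. Use $\int_{\Omega_+}|y^h|^2\de\PP\le\int_\Omega|y^h|^2\de\PP=\frac1N\int_\SS h^2\sum_i f_i^2\de\pp$, where the cross terms vanish by $\pp$-orthogonality. Then read \eqref{eq:thecond} with a constant dominating $\sup_{\SS}\sum_i|f_i|^2$, i.e.\ with the $|f_i|^2$-part of $K^\eps_\lambda(n)$ taken over all of $\SS$; \eqref{eq:lab12}, proved as in Cohen--Davenport--Leviatan, needs this anyway. Conditioning on $\Omega_A$ is only needed in the Chernoff step for $L_{N,n}^\top L_{N,n}+\lambda D^\eps_{N,n}$.

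Two further steps fail as written. First, $\Gamma^\eps_n$ is not diagonal. The basis diagonalizes $\CE_{2,\pp}$, not $\CE^\eps_{2,\pp}$, and the entries $\int_\SS(\Ggamma_\eps-\Ggamma)(f_i,f_k)\de\pp$, $i\ne k$, need not vanish. Hence, with $A=L_{N,n}^\top L_{N,n}+\lambda D^\eps_{N,n}$, the bound $\|A-(I_n+\lambda\Gamma^\eps_n)\|\le\frac12$ gives only $\lambda_{\min}(A)\ge\frac12+\lambda\,\lambda_{\min}(\Gamma^\eps_n)$, and $\lambda_{\min}(\Gamma^\eps_n)\le\mu^\eps_{\min,n}$, typically strictly. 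What you can prove, via $\|\Gamma^\eps_n-\Gamma_n\|<\gamma$, is $\lambda_{\min}(\Gamma^\eps_n)>\min_i\CE_{2,\pp}(f_i,f_i)-\gamma\ge\mu^\eps_{\min,n}-2\gamma$. So $\alpha$ must carry a $\gamma$-shift (presumably the reason $\gamma$ and the diagonal $\Gamma_n$ were introduced). Second, your bookkeeping does not produce the stated right-hand side:
\begin{itemize}
\item $|f-\widetilde f^\star|\le 2M$ gives $4M^2\,\PP(\Omega_+^c)$, not $2M^2\,\PP(\Omega_+^c)$;
\item $4\alpha^2\sigma^2\frac nN=\frac{4\sigma^2}{(1/2+\lambda\mu^\eps_{\min,n})^2}\frac nN$ is strictly larger than the stated $\frac{4\sigma^2}{(1+\lambda\mu^\eps_{\min,n})^2}\frac nN$;
\item $8\lambda\CE^\eps_{2,\pp_N}(P_nf)$ is random, whereas integrating over $\Omega_+$ (where $\Ggamma_\eps=\Ggamma\ge0$ at the sample points) gives the deterministic $8\lambda\CE_{2,\pp}(P_nf)$.
\end{itemize}
These mismatches already sit between the intermediate estimates and the statements of Theorems \ref{teo:cohen2}--\ref{teo:cohen3}. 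They are inherited defects of the statement rather than of your strategy, but you should not claim to reach ``the stated form''. The repaired argument proves the bound with these corrected terms and a $\gamma$-shifted $\alpha$.
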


\section*{Acknowledgments}
Funded by the European Union. Views and opinions expressed are however those of the author(s) only and do not necessarily reflect those of the European Union or the European Research Council Executive Agency. Neither the European Union nor the granting authority can be held responsible for them. This project has received funding from the European Research Council (ERC) under the European Union’s Horizon Europe research and innovation programme (grant agreement No. 101198055, project acronym NEITALG).



\begin{thebibliography}{99}

\bibitem{AmbColDma}
L.~Ambrosio, M.~Colombo, and S.~Di~Marino.
\newblock {Sobolev spaces in metric measure spaces: reflexivity and lower
  semicontinuity of slope}.
\newblock {\em Adv. Stud. Pure Math.}, 67:1--58, 2015.

\bibitem{AmbGigSav08}
L.~Ambrosio, N.~Gigli, and G.~Savar\'e.
\newblock {\em {Gradient flows in metric spaces and in the space of probability
  measures}}.
\newblock Lectures in Mathematics ETH Z\"urich. Birkh\"auser Verlag, Basel,
  second edition, 2008.

\bibitem{AmbGigSav13}
L.~Ambrosio, N.~Gigli, and G.~Savar\'e.
\newblock {Density of {L}ipschitz functions and equivalence of weak gradients
  in metric measure spaces}.
\newblock {\em Rev. Mat. Iberoam.}, 29(3):969--996, 2013.

\bibitem{AmbGigSav14}
L.~Ambrosio, N.~Gigli, and G.~Savar\'e.
\newblock {Calculus and heat flow in metric measure spaces and applications to
  spaces with {R}icci bounds from below}.
\newblock {\em Invent. Math.}, 195(2):289--391, 2014.

\bibitem{AmbIkoLucPas24}
L.~Ambrosio, T.~Ikonen, D.~Lu\v{c}i\'c, and E.~Pasqualetto.
\newblock {Metric {S}obolev spaces {I}: {E}quivalence of definitions}.
\newblock {\em Milan J. Math.}, 92(2):255--347, 2024.

\bibitem{AmbIkoLucPas25}
L.~Ambrosio, T.~Ikonen, D.~Lu\v{c}i\'c, and E.~Pasqualetto.
\newblock {Correction to: Metric Sobolev Spaces I: Equivalence of Definitions}.
\newblock {\em Milan J. Math.}, 93(2):551--555, 2025.

\bibitem{BjoBjo11}
A.~Bj\"orn and J.~Bj\"orn.
\newblock {\em {Nonlinear potential theory on metric spaces}}, volume~17 of
  {\em EMS Tracts in Mathematics}.
\newblock European Mathematical Society (EMS), Z\"urich, 2011.

\bibitem{Che99}
J.~Cheeger.
\newblock {Differentiability of {L}ipschitz functions on metric measure
  spaces}.
\newblock {\em Geom. Funct. Anal.}, 9(3):428--517, 1999.

\bibitem{ChiPeySchVia18}
L.~Chizat, G.~Peyr\'e, B.~Schmitzer, and F.~c.-X. Vialard.
\newblock {An interpolating distance between optimal transport and
  {F}isher-{R}ao metrics}.
\newblock {\em Found. Comput. Math.}, 18(1):1--44, 2018.

\bibitem{CohDavLev13}
A.~Cohen, M.~A. Davenport, and D.~Leviatan.
\newblock {On the stability and accuracy of least squares approximations}.
\newblock {\em Found. Comput. Math.}, 13(5):819--834, 2013.

\bibitem{CohDavLev19}
A.~Cohen, M.~A. Davenport, and D.~Leviatan.
\newblock {Correction to: On the stability and accuracy of least squares
  approximations}.
\newblock {\em Found. Comput. Math.}, 19(1):239, 2019.

\bibitem{DPoSodTam25}
N.~De~Ponti, G.~E. Sodini, and L.~Tamanini.
\newblock {The infimal convolution structure of the Hellinger-Kantorovich
  distance}.
\newblock 2025.
\newblock arXiv 2503.12939.

\bibitem{Dsc25}
L.~Dello~Schiavo.
\newblock {Massive Particle Systems, Wasserstein Brownian Motions, and the
  Dean-Kawasaki Equation}.
\newblock 2025.
\newblock arXiv 2411.14936.

\bibitem{DscSod25}
L.~Dello~Schiavo and G.~E. Sodini.
\newblock {The Hellinger-Kantorovich metric measure geometry on spaces of
  measures}.
\newblock 2025.
\newblock arXiv 2503.07802.

\bibitem{DMa14}
S.~Di~Marino.
\newblock {Sobolev and BV spaces on metric measure spaces via derivations and
  integration by parts}, 2014.
\newblock arXiv 1409.5620.

\bibitem{DMaLucPas20}
S.~Di~Marino, D.~Lu\v{c}i\'c, and E.~Pasqualetto.
\newblock {A short proof of the infinitesimal {H}ilbertianity of the weighted
  {E}uclidean space}.
\newblock {\em C. R. Math. Acad. Sci. Paris}, 358(7):817--825, 2020.

\bibitem{ForHeiSod25}
M.~Fornasier, P.~Heid, and G.~E. Sodini.
\newblock {Approximation theory, computing, and deep learning on the
  {W}asserstein space}.
\newblock {\em Math. Models Methods Appl. Sci.}, 35(4):825--903, 2025.

\bibitem{ForSavSod23}
M.~Fornasier, G.~Savar\'e, and G.~E. Sodini.
\newblock {Density of subalgebras of {L}ipschitz functions in metric {S}obolev
  spaces and applications to {W}asserstein {S}obolev spaces}.
\newblock {\em J. Funct. Anal.}, 285(11):Paper No. 110153, 76, 2023.

\bibitem{GigPas20}
N.~Gigli and E.~Pasqualetto.
\newblock {\em {Lectures on nonsmooth differential geometry}}, volume~2 of {\em
  SISSA Springer Series}.
\newblock Springer, Cham, 2020.

\bibitem{HeiJuhSha15}
J.~Heinonen, P.~Koskela, N.~Shanmugalingam, and J.~T. Tyson.
\newblock {\em {Sobolev spaces on metric measure spaces}}, volume~27 of {\em
  New Mathematical Monographs}.
\newblock Cambridge University Press, Cambridge, 2015.
\newblock An approach based on upper gradients.

\bibitem{KonMonVor16}
S.~Kondratyev, L.~Monsaingeon, and D.~Vorotnikov.
\newblock {A new optimal transport distance on the space of finite {R}adon
  measures}.
\newblock {\em Adv. Differential Equations}, 21(11-12):1117--1164, 2016.

\bibitem{KosMMa98}
P.~Koskela and P.~MacManus.
\newblock {Quasiconformal mappings and {S}obolev spaces}.
\newblock {\em Studia Math.}, 131(1):1--17, 1998.

\bibitem{LieMieSav18}
M.~Liero, A.~Mielke, and G.~Savar\'e.
\newblock {Optimal entropy-transport problems and a new
  {H}ellinger-{K}antorovich distance between positive measures}.
\newblock {\em Invent. Math.}, 211(3):969--1117, 2018.

\bibitem{LuiSav21}
G.~Luise and G.~Savar\'e.
\newblock {Contraction and regularizing properties of heat flows in metric
  measure spaces}.
\newblock {\em Discrete Contin. Dyn. Syst. Ser. S}, 14(1):273--297, 2021.

\bibitem{LucPas20}
D.~Lu\v{c}i\'c and E.~Pasqualetto.
\newblock {Infinitesimal {H}ilbertianity of weighted {R}iemannian manifolds}.
\newblock {\em Canad. Math. Bull.}, 63(1):118--140, 2020.

\bibitem{PasSod25}
E.~Pasqualetto and G.~E. Sodini.
\newblock {Functions of bounded variation and Lipschitz algebras in metric
  measure spaces}.
\newblock {\em ESAIM:COCV}, 32, 2026.

\bibitem{Sav22}
G.~Savar\'e.
\newblock {Sobolev spaces in extended metric-measure spaces}.
\newblock In {\em {New trends on analysis and geometry in metric spaces}},
  volume 2296 of {\em Lecture Notes in Math.}, pages 117--276. Springer, Cham,
  2022.

\bibitem{SavSod24}
G.~Savaré and G.~E. Sodini.
\newblock {A relaxation viewpoint to unbalanced optimal transport: duality,
  optimality and Monge formulation}.
\newblock {\em J. Math. Pures Appl.}, 188:114--178, 2024.

\bibitem{Sha00}
N.~Shanmugalingam.
\newblock {Newtonian spaces: an extension of {S}obolev spaces to metric measure
  spaces}.
\newblock {\em Rev. Mat. Iberoamericana}, 16(2):243--279, 2000.

\bibitem{Sod23}
G.~E. Sodini.
\newblock {The general class of Wasserstein Sobolev spaces: density of cylinder
  functions, reflexivity, uniform convexity and Clarkson's inequalities}.
\newblock {\em Calc. Var. Partial Differential Equations}, 62(7):Paper No. 212,
  41, 2023.

\bibitem{Tro12}
J.~A. Tropp.
\newblock {User-friendly tail bounds for sums of random matrices}.
\newblock {\em Found. Comput. Math.}, 12(4):389--434, 2012. 


\end{thebibliography}
 \end{document}